\numberwithin{equation}{section}
\theoremstyle{break}
 \newtheorem{theorem}{Theorem}[section]
 \newtheorem{proposition}[theorem]{Proposition}
 \newtheorem{corollary}[theorem]{Corollary}
 \newtheorem{lemma}[theorem]{Lemma}
\def\C{\mathcal C}
\def\cg{\C(G)}
\def\cgn{\C(G,N)}
\def\cgq{\C(G/N)}
\def\qq{{\mathbb Q}}
\def\ovk{\overline{K}}
\def\ff{{\mathbb F}}
\def\p{{\mathcal P}}
\def\de{\Delta}
\def\Aut{{\mathsf A}{\mathsf u}{\mathsf t}}
\newcommand\blfootnote[1]{
    \begingroup
    \renewcommand\thefootnote{}\footnote{#1}
    \addtocounter{footnote}{-1}
    \endgroup
}
\begin{document}

\title[Invariable generation and rational homology]{Invariable generation of finite simple groups and rational homology of coset posets}

\author[R. M. Guralnick]{Robert M. Guralnick}
\address{Department of Mathematics, University of Southern California, 3620 S. Vermont Ave, Los Angeles, CA 90089-2532 USA}
\email{guralnic@usc.edu}

\author[J. Shareshian]{John Shareshian}
\address{Department of Mathematics, Washington University, One Brookings Drive, St Louis, MO 63124 USA}
\email{jshareshian@wustl.edu}

\author[R. Woodroofe]{Russ Woodroofe}
\address{Univerza na Primorskem, Glagolja\u{s}ka 8, 6000 Koper, Slovenia}
\email{russ.woodroofe@famnit.upr.si}
\urladdr{\url{https://osebje.famnit.upr.si/~russ.woodroofe/}}

\begin{abstract}
We show that every finite simple group is generated invariably by a Sylow subgroup and a cyclic group. It follows that that the order complex of the coset poset of an arbitrary finite group has nontrivial reduced rational homology.
\end{abstract}

\maketitle

\blfootnote{Work of the first author was partially supported by NSF Grant DMS-1901595 and Simons Foundation
Fellowship 609771. Work of the second author was supported in part by NSF Grant DMS 1518389. Work
of the third author is supported in part by the Slovenian Research Agency research program P1-0285 and
research projects J1-9108, N1-0160, J1-2451, J1-3003, and J1-50000.}

\section{Introduction}

Given a group $G$ and subsets $S,T$ of $G$, we say that $S$ and $T$ {\it generate $G$ invariably} if 
$$
\langle S^g,T^h \rangle=G
$$
for all $g,h \in G$.  Often, we identify a set of size one with the element that it contains.  We are interested in invariable generation of finite simple groups.  Herein we prove the following result, using the Classification.

\begin{theorem} \label{main1}
Every finite simple group is generated invariably by a Sylow subgroup and a cyclic subgroup.
\end{theorem}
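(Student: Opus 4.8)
The plan is to invoke the Classification and argue family by family, in each case reducing to the problem of finding a single element that avoids a controlled list of maximal subgroups. The key elementary observation is a reformulation of invariable generation: if $P$ is a Sylow $p$-subgroup of $G$ and $x\in G$, then $P$ and $\langle x\rangle$ generate $G$ invariably if and only if $x$ lies in no conjugate of any maximal subgroup of $G$ of index prime to $p$. Indeed, $\langle P^{g},\langle x\rangle^{h}\rangle$ is proper exactly when some maximal subgroup $M$ contains both $P^{g}$ and $x^{h}$; a conjugate of $P$ lies in $M$ precisely when $p\nmid[G:M]$, while a conjugate of $x$ lies in $M$ precisely when $x$ lies in some conjugate of $M$. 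So the strategy is: for each simple $G$, choose a convenient prime $p$, identify the conjugacy classes of maximal subgroups of $p'$-index, and exhibit an $x$ missing all of them. One clean sub-case is worth isolating: if for some $p$ the group $G$ has a single conjugacy class of maximal subgroups of $p'$-index, then Jordan's theorem that a finite group is never the union of the conjugates of a proper subgroup produces a suitable $x$ with no further work.

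For $G$ of Lie type I would take $p$ to be the defining characteristic and $P$ a Sylow $p$-subgroup (the unipotent radical of a Borel subgroup). By the Borel--Tits theorem the subgroups of $G$ containing $P$ are exactly the parabolic subgroups, so the maximal subgroups of $p'$-index are precisely the maximal parabolic subgroups. When $G$ has rank $1$ there is only one class of parabolics and the theorem follows from Jordan's theorem above. In higher rank I would take $x$ to be a regular semisimple element of a Coxeter torus --- more generally, of a maximal torus attached to an elliptic element of the Weyl group, and for classical groups concretely a Singer-type cyclic torus of order a primitive part of $q^{m}\mp1$. Such an $x$ lies in no proper parabolic: if it did, it would be conjugate into a Levi subgroup $L$, forcing its connected centralizer, an elliptic maximal torus, to be contained in $L$, which is impossible unless $L=G$. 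Existence of a regular semisimple element in such a torus follows from the structure theory of maximal tori together with Zsygmondy's theorem on primitive prime divisors, the finitely many Zsygmondy exceptions being handled directly.

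For $G=A_{n}$ with $n\ge5$ I would pick a prime $p$ with $n/2<p\le n$ (available by Bertrand's postulate), so that a Sylow $p$-subgroup is generated by a single $p$-cycle. A maximal subgroup then has $p'$-index exactly when it contains a $p$-cycle. The imprimitive maximal subgroups $(S_{a}\wr S_{b})\cap A_{n}$ have order prime to $p$, since any prime dividing $(a!)^{b}b!$ is at most $\max(a,b)\le n/2$, so they are excluded; by Jordan's theorem on primitive groups containing a cycle of prime length, the only primitive possibilities are $A_{n}$ itself and, when $p\in\{n-2,n-1,n\}$, a short classified list (affine groups of degree $p$, $\mathrm{PSL}_{2}$ acting on $p+1$ points, and a few Mathieu groups and relatives); the remaining $p'$-index maximal subgroups are the intransitive $(S_{k}\times S_{n-k})\cap A_{n}$ with $k\le n-p$. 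It then suffices to produce $x\in A_{n}$ with no invariant subset of size between $1$ and $n-p$ and with order avoiding the few primitive exceptions; a product of two disjoint cycles of suitable lengths near $n/2$ does the job, after checking small $n$ by hand.

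For the sporadic simple groups (and the Tits group) the statement is a finite verification: for each $G$ I would pick a large prime $p\mid|G|$, so that the Sylow $p$-subgroup is cyclic and only a handful of maximal subgroups have $p'$-index, and then read off from the known list of maximal subgroups and conjugacy classes an element $x$ of suitable order lying in none of them. I expect the main obstacle to be the Lie type case: arranging, uniformly across all untwisted and twisted families and all $q$, a single element outside every parabolic subgroup, and disposing of the small-rank, small-field, and exceptional-isomorphism cases where the generic torus argument degenerates. The alternating case is routine given the classification of primitive groups containing cycles, and the sporadic case is a bounded computation.
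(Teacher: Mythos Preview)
Your proposal is correct and follows essentially the same route as the paper: the Classification, then Bertrand plus Jordan for alternating groups, Borel--Tits plus Zsigmondy primes for groups of Lie type (the paper simply tabulates, for each family, a Zsigmondy exponent $e$ such that a Zsigmondy prime for $(q,e)$ divides $|K|$ but no parabolic order, rather than arguing via regular elements of elliptic tori), and an Atlas check for the sporadics. The only minor divergence is that for $A_n$ the paper insists on $n/2<p<n-2$, so that Jordan's theorem on primitive groups containing a $p$-cycle fixing at least three points applies immediately, sidestepping the classification of primitive groups with $p\in\{n-2,n-1,n\}$ that you propose to invoke.
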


We apply Theorem~\ref{main1} to give a streamlined proof of a stronger result than one proved by two of the present authors in \cite{Shareshian/Woodroofe:2016}.  Given a finite poset $\p$, the {\it order complex} $\Delta \p$ is the abstract simplicial complex whose $d$-dimensional faces are the chains of length $d$ (cardinality $d+1$) from $\p$.  Given a finite group $G$, we write $\cg$ for the set of all cosets of all proper subgroups of $G$ (including the trivial subgroup if $|G|>1$), ordered by inclusion.  The main result in \cite{Shareshian/Woodroofe:2016} is that if $G$ is a finite group, then $\Delta \cg$ has nontrivial reduced homology in characteristic~$2$, and is therefore not contractible.  This settles a question raised by Brown in \cite{Brown:2000}.
In Section \ref{sec.cosetposet}, we use Theorem~\ref{main1} and Smith Theory to prove the following result.

\begin{theorem} \label{cosetposet}
If $G$ is a finite group, then $\Delta \cg$ has nontrivial reduced rational homology.
\end{theorem}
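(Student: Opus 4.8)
The plan is to argue by contradiction, assuming that $\Delta\cg$ is rationally acyclic, and to exploit the left-translation action of $G$ on the coset poset. First I would fix the dictionary. The group $G$ acts on $\cg$ by $g\cdot xH=(gx)H$, hence acts on $\Delta\cg$; for a subgroup $A\le G$, an element $a\in A$ fixes $xH$ exactly when $x^{-1}ax\in H$, so the subcomplex of $\Delta\cg$ fixed by $A$ is the order complex of the subposet $\cg^{A}=\{xH\in\cg:x^{-1}Ax\le H\}$, and the union of its $G$-translates is $\mathcal{G}_{A}=\{xH\in\cg:H\text{ contains a conjugate of }A\}$. In this language, subsets $S$ and $T$ generate $G$ invariably precisely when no proper subgroup of $G$ contains simultaneously a conjugate of $\langle S\rangle$ and a conjugate of $\langle T\rangle$; equivalently, $\mathcal{G}_{\langle S\rangle}\cap\mathcal{G}_{\langle T\rangle}=\emptyset$ inside $\cg$.

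Next I would reduce to the case that $G$ is nonabelian simple. Let $G$ have least order with $\Delta\cg$ rationally acyclic. For a proper nontrivial $N\trianglelefteq G$, the known behaviour of coset posets under quotients --- in the vein of the multiplicativity of Brown's probabilistic zeta function over a normal subgroup, and the corresponding relation among $\Delta\cg$, $\Delta\cgq$ and $\Delta\cgn$ --- shows that rational acyclicity of $\Delta\cg$ propagates to $\Delta\cgq$, contradicting minimality because $|G/N|<|G|$. Since $\Delta\C(\mathbb{Z}/p\mathbb{Z})$ is a discrete set of $p$ points, a minimal counterexample is nonabelian simple.

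For $G$ nonabelian simple I invoke Theorem~\ref{main1}: $G=\langle P,c\rangle$ invariably, with $P$ a Sylow $p$-subgroup and $c$ generating a cyclic subgroup $C$. The engine is Smith theory applied to the translation action of $P$ (and of the prime-power subgroups of $C$) on $\Delta\cg$. The fixed subcomplex of $P$ is the order complex of $\cg^{P}$, and that of $C$ of $\cg^{C}$; invariable generation says $\mathcal{G}_{P}\cap\mathcal{G}_{C}=\emptyset$, so these fixed-point data are supported on disjoint families of proper subgroups and cannot interfere. One then identifies these fixed subcomplexes, up to homotopy, with coset posets of proper subgroups or sections of $G$ (that of $N_{G}(P)$ playing the leading role) --- here the nerve-lemma description of $\Delta\cg$, valid because $\Phi(G)=1$, as the complex on vertex set $G$ whose faces are the subsets lying in a common coset of a maximal subgroup, is a convenient bookkeeping device --- and runs an induction on $|G|$: the theorem for these smaller groups, fed back through Smith theory, yields the theorem for $G$.

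The principal obstacle is the passage from modular to rational homology. Classical Smith theory transports only $\ff_{p}$-acyclicity to fixed subcomplexes, and a rationally acyclic complex need not be $\ff_{p}$-acyclic; so Smith theory applied to $p$-group actions alone (as suffices for the characteristic-$2$ result of \cite{Shareshian/Woodroofe:2016}) cannot reach the rational statement. The point of Theorem~\ref{main1} is that the partner of the Sylow subgroup is a single \emph{cyclic} subgroup $C$: with only one conjugacy class of generators $c$, one can run an Euler-characteristic/Lefschetz-type argument --- built on the congruence $\tilde\chi(X)\equiv\tilde\chi(X^{A})\pmod{\ell}$ for each prime-order subgroup $A\le C$, together with the disjointness above --- that either forces nonzero reduced homology in the top nonvanishing degree, where integral homology is torsion-free and rational nonvanishing is automatic, or detects an honest rational cycle via a transfer argument for $C$. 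Making this precise, and verifying the (equivariant) homotopy identifications of the Smith fixed subcomplexes with genuine coset posets of smaller groups, is where the real work of the section lies.
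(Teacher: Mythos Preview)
There is a genuine gap. Your entire argument runs through the \emph{left-translation} action of $G$ on $\cg$, and for simple $G$ you propose to apply Smith theory to $P$ and to $C$ separately and then recombine. That recombination is exactly the step that fails: with the one-sided action, $C$ and $P$ sit inside $G$ and together generate all of $G$, so there is no single group of the shape ``cyclic-by-(prime power)'' to which the rational Smith--Oliver package applies. Classical Smith theory for the $p$-group $P$ transports only $\ff_p$-acyclicity, which you do not have; the cyclic-group statement for $C$ only gives $\tilde\chi\bigl((\Delta\cg)^C\bigr)=0$, and since $P$ need not normalize $C$ it does not even act on $(\Delta\cg)^C$. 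Your proposed fixes---identifying $(\Delta\cg)^P$ with a coset poset of a smaller group and inducting, a Lefschetz/Euler-characteristic argument over the primes dividing $|C|$, or a transfer---do not assemble into a proof, and the claim that invariable generation makes the two fixed-point loci ``disjoint'' is not what is needed (Smith theory wants a single fixed-point set to be empty, not two of them to be disjoint).

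The paper resolves this with a different action: the \emph{two-sided} action of $G\times G$ on $\cg$, via $Hx\mapsto g^{-1}Hxk$. Now $C$ acts through the left factor and $P$ through the right factor, so the genuine direct product $C\times P$ acts, and one may chain the rational cyclic Smith theorem for $C\times 1$ with the $p$-group Euler-characteristic congruence for $1\times P$ to conclude that a rationally acyclic complex would have a nonempty $C\times P$-fixed set. A short computation shows that $(Hx)^{C\times P}=Hx$ forces $\langle C,P^{x^{-1}}\rangle\le H$, so invariable generation of $N$ by $C$ and $P$ makes the fixed set empty in $\cgn$---the contradiction. Your reduction step also slips: from the join decomposition $\Delta\cg\simeq\Delta\cgn*\Delta\cgq$ and K\"unneth, rational acyclicity of $\Delta\cg$ only says that \emph{one} of the two factors is rationally acyclic, not that $\Delta\cgq$ is; the paper deals with this by proving directly that $\Delta\cgn$ has nontrivial rational homology for every minimal normal $N$, which (via Brown's join formula and induction on a chief series) yields the theorem without first reducing to simple $G$.
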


The above-mentioned characteristic two result follows from Theorem~\ref{cosetposet} and the Universal Coefficient Theorem.  Given the results in \cite{Brown:2000} and \cite{Shareshian/Woodroofe:2016}, it is natural to ask whether the reduced Euler characteristic $\widetilde{\chi}(\Delta \cg)$ is nonzero for all finite groups $G$.  This question remains open.

In \cite{Damian/Lucchini:2007}, Damian and Lucchini showed that ``most" finite simple groups are generated invariably by a Sylow $2$-subgroup and a cyclic subgroup of prime order.  Since there is no constraint on the prime dividing the order of the Sylow subgroup in Theorem~\ref{main1}, we may take an easier route than that taken in \cite{Damian/Lucchini:2007}.  It is clear that Theorem~\ref{main1} holds for groups of prime order.  In Sections \ref{sec.alternating} and \ref{sec.sporadic}, we confirm results stronger than Theorem~\ref{main1} for alternating groups and sporadic groups.

\begin{proposition} \label{alternating}
For each integer $n \geq 5$, the alternating group $A_n$ is generated invariably by two elements, at least one of which has prime order.
\end{proposition}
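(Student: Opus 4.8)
The plan is to take the two generators to be a cycle of prime length (this will be the prime-order element) together with a permutation chosen so as to force transitivity, and then to promote transitivity to full generation using classical permutation-group results. Fix $n \ge 8$. For every such $n$ there is a prime $p$ with $n/2 < p \le n-3$: for large $n$ this follows from any prime-gap bound sharper than Bertrand's postulate, such as Nagura's theorem, and the remaining finite range can be checked directly. Let $x \in A_n$ be a $p$-cycle; since $p$ is odd, $x$ is an even permutation, and $x$ has prime order $p$. Let $y \in A_n$ be an $n$-cycle when $n$ is odd, and a product of two disjoint $(n/2)$-cycles when $n$ is even; in either case $y \in A_n$.

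First I would show that $\langle x^g, y^h \rangle$ is transitive for all $g,h$. If it were intransitive it would stabilize a partition $\{1,\dots,n\} = A \sqcup B$ with $1 \le |A| \le n-1$, and then both the cycle type of $x$ and that of $y$ would admit a sub-multiset of their parts summing to $|A|$. For $x$, of type $p \cdot 1^{n-p}$, the attainable sums are $\{0,1,\dots,n-p\} \cup \{p,\dots,n\}$; for $y$ they are $\{0,n/2,n\}$ (or $\{0,n\}$ if $n$ is odd). Since $n/2 < p$, these sets meet only in $\{0,n\}$, a contradiction, so the group is transitive.

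Next I would note that a transitive group of degree $n$ that contains a $p$-cycle with $p$ prime and $p > n/2$ must be primitive: in any nontrivial block system the blocks have size $k \le n/2 < p$, so the $p$-cycle fixes pointwise every block it stabilizes and carries each remaining block into its own support, forcing $k \mid p$, which is impossible. Hence $\langle x^g, y^h \rangle$ is a primitive group of degree $n$ containing the $p$-cycle $x^g$, with $p$ prime and $p \le n-3$; by Jordan's theorem it contains $A_n$, and being contained in $A_n$ it equals $A_n$. This proves the statement for all $n \ge 8$; note that the prime-order generator has order $p > n/2$, so its cyclic span is a Sylow $p$-subgroup of $A_n$, which is the form of conclusion needed for Theorem~\ref{main1}.

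It remains to handle $n \in \{5,6,7\}$, where $(n/2,n-3]$ contains no prime; these I would treat by hand using the known lists of transitive subgroups. For $A_5$, take a $5$-cycle and a $3$-cycle: any conjugate of the $5$-cycle generates a transitive subgroup, and the only transitive subgroup of $A_5$ containing an element of order $3$ is $A_5$. For $A_7$, take a $7$-cycle and a $5$-cycle: the $7$-cycle forces transitivity and hence (as $7$ is prime) primitivity, and among the primitive groups of degree $7$ only $A_7$ and $S_7$ have order divisible by $5$. The genuinely delicate case is $A_6$, because $A_5$ has an exceptional transitive action on $6$ points; there I would take a $5$-cycle (prime order) together with a permutation of cycle type $4 \cdot 2$, note that the attainable partial-sum test again forces transitivity, and use that the only transitive subgroup of $A_6$ containing both an element of order $5$ and an element of order $4$ is $A_6$ itself---the transitive copies of $A_5$ in $A_6$ contain no element of order $4$. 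The main obstacle is precisely this last part: certifying that $(n/2,n-3]$ contains a prime for every $n \ge 8$ (needing an input on prime gaps together with a finite check) and clearing the three exceptional degrees directly, with $A_6$ demanding the above trick to evade the exotic action of $A_5$.
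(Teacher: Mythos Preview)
Your proof is correct and follows essentially the same route as the paper: a $p$-cycle with $n/2 < p \le n-3$ paired with an $n$-cycle (odd $n$) or a product of two $(n/2)$-cycles (even $n$), then transitivity, primitivity, and Jordan's theorem, with $n\in\{5,6,7\}$ handled directly using exactly the same three pairs of elements. The only cosmetic differences are that you justify the existence of $p$ via Nagura plus a finite check (the paper simply invokes Bertrand's Postulate for this) and that you phrase the transitivity and primitivity steps via partial sums and block divisibility rather than via orbit intersection and the order of a wreath product.
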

 
 \begin{proposition} \label{sporadic}
 All sporadic simple groups except the Mathieu groups $M_{11}$, $M_{12}$, and $M_{24}$ are generated invariably by two elements of prime order.  Moreover,
 \begin{itemize}
 \item $M_{11}$ is generated invariably by an element of order eleven and an element of order eight;  
 \item $M_{24}$ is generated invariably by an element of order $23$ and an element of order four; and  
 \item $M_{12}$ is generated invariably by an element of order eleven and an element of order ten, but not by two elements of prime power order.
 \end{itemize}
\end{proposition}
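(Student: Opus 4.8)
The plan rests on the standard criterion for invariable generation: two elements $x,y\in G$ generate $G$ invariably precisely when no maximal subgroup of $G$ contains both a conjugate of $x$ and a conjugate of $y$; writing $C$ and $D$ for the conjugacy classes of $x$ and $y$, this says that no conjugacy class of maximal subgroups of $G$ meets both $C$ and $D$. I would verify this for suitably chosen classes in each of the $26$ sporadic groups, using two pieces of standard data: the (now complete, including the Monster and the Baby Monster) classification of the maximal subgroups of each sporadic group, and the fusion of the conjugacy classes of each maximal subgroup into the ambient group. Both are recorded in the Atlas of finite groups, and the fusion maps are also available in the character table library of GAP.

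The main tool for the positive assertions is the elementary remark that if $p$ and $q$ are distinct primes dividing $|G|$ such that no maximal subgroup of $G$ has order divisible by $pq$, then any element of order $p$ and any element of order $q$ generate $G$ invariably: such elements exist by Cauchy's theorem, and a maximal subgroup containing conjugates of both would have order divisible by $pq$. For each sporadic group other than $M_{11}$, $M_{12}$, and $M_{24}$ I would exhibit such a pair $(p,q)$ --- typically taking $p$ to be a large prime divisor of $|G|$, so that the maximal subgroups of order divisible by $p$ are few, and then choosing $q$ so that the (short) lists of maximal subgroups of order divisible by $p$ and by $q$ are disjoint. This is a finite inspection of the orders of the maximal subgroups; for instance $(p,q)=(59,71)$ works for the Monster and $(p,q)=(7,23)$ for $M_{23}$. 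For $M_{11}$, $M_{12}$, and $M_{24}$ this test fails for every pair of primes --- indeed $L_2(11)$, $M_{11}$, and $M_{23}$ are respectively maximal subgroups whose order is divisible by every prime divisor of the whole group --- which is why these three require separate treatment.

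For the three exceptional Mathieu groups the positive statements are obtained by the same idea, now tracking individual classes rather than just orders. In $M_{11}$ the only maximal subgroup of order divisible by $11$ is $L_2(11)$, which has no element of order $8$, whereas $M_{11}$ does; so no maximal subgroup meets both a class of order $11$ and a class of order $8$. In $M_{24}$ the only maximal subgroups of order divisible by $23$ are $M_{23}$ and $L_2(23)$, and from the cycle shapes on the $24$ points one sees that $M_{24}$ has a class of elements of order $4$ that is fixed-point-free (hence in no conjugate of $M_{23}$) and whose cycle shape differs from that of the unique class of order-$4$ elements of $L_2(23)$ (hence in no conjugate of $L_2(23)$); pairing this class with any class of order $23$ gives invariable generation. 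In $M_{12}$ the maximal subgroups of order divisible by $11$ are the two conjugacy classes of $M_{11}$ together with $L_2(11)$, none of which contains an element of order $10$, whereas $M_{12}$ does; so an element of order $11$ and an element of order $10$ generate $M_{12}$ invariably.

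There remains the negative assertion for $M_{12}$: no two elements of prime-power order generate it invariably. The prime-power element orders of $M_{12}$ are $2$, $3$, $4$, $5$, $8$, and $11$, comprising eleven conjugacy classes, and one must check that for every pair of these classes some maximal subgroup meets both. Pairs in which both orders are at most $8$ are dealt with at once inside a single copy of $M_{11}$, which has elements of every such order; the pair formed by the two classes of order $11$ lies inside $L_2(11)$ or inside $M_{11}$; and the pairs formed by a class of order $11$ together with a class of order $8$ are the delicate ones, because $L_2(11)$ has no element of order $8$ and the maximal subgroup $A_6{.}2^2$ has no element of order $11$. For these one uses that the two $M_{12}$-classes of $M_{11}$ together realize every class of $M_{12}$ of order $8$ and of order $11$, so that some conjugate of $M_{11}$ contains representatives of any such pair; the remaining pairs are handled similarly. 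Carrying out this bookkeeping --- equivalently, reading off the relevant fusion maps from the Atlas, or recomputing them with GAP --- is the only genuinely case-heavy step of the argument, and is where I expect the bulk of the effort to lie. Once it is complete, Proposition~\ref{sporadic} follows.
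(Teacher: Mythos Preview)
Your overall strategy matches the paper's, but there is a genuine gap in the positive claims. You assert that for every sporadic group other than $M_{11}$, $M_{12}$, $M_{24}$ one can find primes $p,q$ with $pq\mid|G|$ and $pq\nmid|M|$ for every maximal $M<G$. This fails for $HS$, $McL$, $Co_2$, and $Co_3$: each of these contains a maximal subgroup isomorphic to $M_{22}$ (for $HS$, $McL$) or $M_{23}$ (for $Co_2$, $Co_3$) whose order is divisible by \emph{every} prime dividing $|G|$, so no such pair $(p,q)$ exists. The paper therefore treats these four groups separately, with class-level arguments of exactly the kind you reserve for the Mathieu exceptions: in $HS$ and $McL$ the maximal overgroups of an element of order $11$ are all isomorphic to $M_{11}$ or $M_{22}$, each having a single class of elements of order $5$ normalizing an $11$-subgroup, whereas $HS$ and $McL$ have two classes of order $5$; in $Co_2$ and $Co_3$ the only maximal overgroups of an element of order $23$ are copies of $M_{23}$, which has a single class of involutions, whereas $Co_2$ and $Co_3$ have more than one. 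Your plan goes through once you add these four cases to the ``class-tracking'' list.

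A smaller point on the $M_{12}$ negative assertion: it is not true that a single copy of $M_{11}$ disposes of all pairs of classes of order at most $8$. The group $M_{12}$ has two classes of involutions and two classes of elements of order $3$, and a point-stabilizer $M_{11}$ meets only one class of each. The paper handles these pairs using additional maximal subgroups: the transitive $L_2(11)$ (which contains the fixed-point-free involutions and order-$3$ elements), the stabilizer of a $2$-set, and the stabilizer of a $3$-set (isomorphic to $AGL_2(3)$, containing both $3A$ and $3B$ together with elements of order $8$). You acknowledge that fusion bookkeeping is needed, but the specific shortcut you describe does not work; several maximal subgroups must be brought into play.
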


For simple groups of Lie type, we have a result slightly stronger than Theorem~\ref{main1}.

\begin{proposition} \label{Lietype}
If $K$ is a simple group of Lie type in characteristic~$p$, and not isomorphic with one of $L_6(2)$, $U_4(2)$, $Sp_6(2)$, or $\Omega_8^+(2)$, then $K$ is generated invariably by a Sylow $p$-subgroup and an element of prime order.
\end{proposition}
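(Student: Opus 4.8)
The plan is to translate invariable generation by a Sylow $p$-subgroup and a cyclic group into a statement about parabolic subgroups, and then to produce the required prime-order element from a primitive prime divisor, using the geometry of the natural module for classical $K$ and known facts about maximal tori and parabolics for exceptional $K$. \textbf{Step 1 (reduction to parabolics).} Fix a Sylow $p$-subgroup $P$ of $K$. Since all Sylow $p$-subgroups are conjugate, $P$ and a cyclic subgroup $\langle x\rangle$ generate $K$ invariably precisely when $\langle P, x^h\rangle = K$ for every $h\in K$. A proper subgroup of $K$ containing a Sylow $p$-subgroup is contained in a proper parabolic subgroup --- this is a standard consequence of the Borel--Tits theorem (equivalently, a maximal subgroup of $K$ of index prime to $p$ is parabolic). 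As every proper parabolic subgroup of $K$ contains a conjugate of $P$, it follows that $\langle P, x^h\rangle = K$ for all $h$ if and only if $x$ lies in no proper parabolic subgroup of $K$. So it suffices to exhibit an element $x\in K$ of prime order lying in no proper parabolic subgroup.

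\textbf{Step 2 (the generic construction).} To each family one attaches an integer $e = e(K)$, the order of a suitable elliptic element of the (twisted) Weyl group: $e = n$ for $L_n(q)$; $e = 2n$ for $Sp_{2n}(q)$, for $\Omega_{2n+1}(q)$, for $\Omega_{2n}^-(q)$, and for $U_n(q)$ with $n$ odd; $e = 2(n-1)$ for $\Omega_{2n}^+(q)$ and for $U_n(q)$ with $n$ even; and $e$ equal to the Coxeter number (or an appropriate regular number, for the Suzuki and Ree groups) in the exceptional cases. Let $r$ be a primitive prime divisor of $q^e - 1$. Then $r \equiv 1 \pmod e$, so $r$ is coprime both to the order of the centre of the simply connected group of the same type and to the order of the group of diagonal automorphisms; hence $K$ contains an element $x$ of order $r$, lying in a maximal torus whose order is divisible by $\Phi_e(q)$, and this torus is elliptic, that is, contained in no proper parabolic. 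One checks that $x$ itself lies in no proper parabolic: for classical $K$ the conditions $r\mid q^e-1$ and $r\nmid q^i - 1$ for $0 < i < e$ force $x$ to stabilise no nonzero proper subspace that could serve as the defining flag of a parabolic --- an arbitrary subspace in the linear case, and a totally singular (respectively totally isotropic) subspace in the symplectic, orthogonal, and unitary cases --- so $x$ lies in no maximal, hence in no proper, parabolic subgroup; for exceptional $K$ the same conclusion follows from the explicit description of the parabolics together with the fact that $\Phi_e(q)$ divides the order of no proper parabolic subgroup for the chosen $e$.

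\textbf{Step 3 (exceptions, and the main obstacle).} By Zsigmondy's theorem a primitive prime divisor of $q^e - 1$ exists for every pair $(q,e)$ occurring above except $(q,e) = (2,6)$ and the pairs $(q,2)$ with $q+1$ a power of $2$. Among the groups in question, $(q,e) = (2,6)$ arises exactly for $K\in\{L_6(2),\,U_4(2),\,Sp_6(2),\,\Omega_8^+(2)\}$, and $(q,2)$ with $q+1$ a power of $2$ arises for $K = L_2(q)$ with $q$ a Mersenne prime. The Mersenne cases are handled directly: $L_2(7)\cong L_3(2)$, where the element of order $7$ (a primitive prime divisor of $2^3-1$) works, and for $q\ge 31$ a Sylow $2$-subgroup of $L_2(q)$ is itself a maximal subgroup (dihedral of order $q+1$), so any element of odd prime order does the job. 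For the four remaining groups one verifies that they are genuine exceptions: since $2^6 - 1 = 63$ has no primitive prime divisor, every element of prime order has order $3$, $5$, or $7$, and a short analysis of the possible module structures shows that every such element stabilises a nonzero proper subspace --- totally isotropic in the three non-linear classical cases, where the hypotheses on the defining form are precisely what forces some invariant subspace to be totally isotropic --- and so lies in a proper parabolic. I expect the main obstacle to lie not in Step 1 or in the small computations of Step 3, but in carrying out Step 2 uniformly across the twisted families: fixing the correct $e$ (which, for the unitary and orthogonal groups, cannot be read off from a primitive-prime-divisor count alone but requires that the enveloping torus genuinely be elliptic, so that one must also rule out the wrong form type on the nontrivial summand) and supplying clean proofs of the no-parabolic property for the Suzuki, Ree, and triality groups.
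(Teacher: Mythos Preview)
Your approach is essentially the paper's: reduce via Borel--Tits to finding a prime-order element lying in no proper parabolic, then supply that element as one of Zsigmondy-prime order for a well-chosen exponent $e$, and finally dispose of the handful of $(q,e)$ for which no Zsigmondy prime exists. Your list of exponents agrees with the paper's (after translating between $A_n$ and $L_{n+1}$ indexing), and your inventory of Zsigmondy exceptions is the same.

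There is, however, a genuine error in Step~3 for $K=L_2(q)$ with $q$ a Mersenne prime. The Sylow subgroup in the statement is the Sylow $q$-subgroup (defining characteristic), so the maximality of the Sylow $2$-subgroup is irrelevant, and the assertion that ``any element of odd prime order does the job'' is false: for Mersenne $q$ one has $q+1=2^k$, so every odd prime dividing $|L_2(q)|$ divides $q(q-1)/2$, which is precisely the order of a Borel subgroup; hence every element of odd prime order lies in a parabolic. The correct observation (and the paper's) is the opposite one: since $(q-1)/2=2^{k-1}-1$ is odd, the Borel has odd order, so an \emph{involution} lies in no parabolic. This uniform argument also covers $L_2(7)$ in its characteristic-$7$ realization, which your appeal to the isomorphism with $L_3(2)$ does not.

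A smaller point: the obstacle you anticipate in Step~2 --- fixing $e$ and verifying the no-parabolic property for twisted and exceptional families --- is handled in the paper simply by citing the known Levi structure of maximal parabolics (Kleidman--Liebeck for the classical groups, the individual maximal-subgroup classifications for the exceptionals), with no need for a uniform torus-theoretic argument. Note also that ``$e$ equals the Coxeter number'' is not quite right for ${}^3D_4(q)$, where one must take $e=12$ rather than $6$.
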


We will show in Lemma~\ref{l62} below that $L_6(2)$ is generated invariably by an element of order seven and an element of order~$31$.  Consulting \cite{Conway/Curtis/Norton/Parker/Wilson:1985}, we see that $Sp_6(2)$ is generated invariably by a Sylow $3$-subgroup and an element of order seven and that $O_8^+(2)$ is generated invariably by a Sylow $3$-subgroup and an element of order five. The group $U_4(2)$ is isomorphic with $PSp_4(3)$ and, as we shall prove, is generated invariably  by a Sylow $3$-subgroup and an element of order five.  Hence, Proposition~\ref{Lietype} suffices to complete the proof of Theorem~\ref{main1}.  We prove Proposition~\ref{Lietype} in Section \ref{sec.Lietype}.  The key point is that in almost all cases there is a Zsigmondy prime for some $q^e-1$ that divides $|K|$ but does not divide the order of any parabolic subgroup.  

In subsequent work we will show that all but a negligible portion of the finite simple groups of Lie type are generated invariably by an element of prime order and an element of prime power order, and ``most" of these are generated invariably by two elements of prime order.

After proving our results in Sections \ref{sec.alternating}, \ref{sec.sporadic}, \ref{sec.Lietype}, and \ref{sec.cosetposet}, we make some final comments in Section \ref{sec.final}.

\section{Alternating groups} \label{sec.alternating}

We prove first a more specific version of Proposition~\ref{alternating} that holds for all $n>7$.  Bertrand's Postulate, which was proved by Chebyshev in \cite{Chebyshev:1852}, says that the prime $p$ appearing in Proposition~\ref{alt2} exists.

\begin{proposition} \label{alt2}
Assume that $n>7$.  Let $p$ be a prime satisfying $\frac{n}{2}<p<n-2$ and let $y \in A_n$ have order $p$.  If $n$ is odd let $x \in A_n$ be an $n$-cycle.  If $n$ is even let $x \in A_n$ be the product of two disjoint $\frac{n}{2}$-cycles.  Then $x$ and $y$ generate $A_n$ invariably. 
\end{proposition}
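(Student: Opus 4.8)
The plan is to argue that any group $H$ containing a conjugate of $x$ and a conjugate of $y$ must be all of $A_n$, by first showing such an $H$ is transitive, then primitive, and finally invoking the classification of primitive groups containing an element of large prime order. Since invariable generation only depends on conjugacy classes, I may as well fix $x$ and $y$ as in the statement and consider an arbitrary subgroup $H = \langle x^g, y^h\rangle$; replacing by a conjugate of $H$, it suffices to treat $H = \langle x, y^h \rangle$ for arbitrary $h \in A_n$. The first step is transitivity: when $n$ is odd, $x$ is an $n$-cycle, so $\langle x \rangle$ is already transitive and hence so is $H$. When $n$ is even, $x$ is a product of two $\frac n2$-cycles, so $\langle x\rangle$ has two orbits each of size $\frac n2$; here I would use $y^h$, an element of prime order $p > \frac n2$, which fixes $n - p < \frac n2$ points and moves the remaining $p > \frac n2$ in a single $p$-cycle. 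Since $p > \frac n2$, the support of that $p$-cycle cannot be contained in one $\langle x\rangle$-orbit of size $\frac n2$, so it meets both orbits; therefore $H$ has a single orbit, i.e.\ $H$ is transitive.

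Next I would upgrade transitivity to primitivity. Suppose $H$ preserves a nontrivial block system with blocks of size $b$, where $b \mid n$, $1 < b < n$. The element $y^h$ is a $p$-cycle (on its support) times fixed points with $p > \frac n2 \geq b$ prime, and an element normalizing a block system either permutes the blocks or acts within them in a controlled way; because $p > \frac n2$ exceeds both $b$ and $n/b$, the cycle structure of $y^h$ is incompatible with preserving any such block system — the $p$-cycle would have to lie inside a single block (impossible, as $b < p$) or project to a $p$-cycle on the set of $n/b < p$ blocks (impossible). Hence no nontrivial block system exists and $H$ is primitive.

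Finally, $H$ is a primitive subgroup of $S_n$ containing an element of prime order $p$ with $\frac n2 < p < n-2$, so $p \leq n-3$ and $p$ divides $|H|$; in particular $n - p \geq 3$, and the $p$-cycle fixes at least $3$ points. By a classical theorem (going back to Jordan, in the strong form due to the O'Nan--Scott/CFSG analysis of primitive groups containing an element with few non-fixed points — e.g.\ as in Jones's or Isbell's theorems, or the tables of primitive groups containing a $p$-cycle), a primitive permutation group of degree $n$ containing a $p$-cycle for a prime $p \le n-3$ must be $A_n$ or $S_n$; and since $x \in A_n$ and $y \in A_n$, we get $H \leq A_n$, forcing $H = A_n$. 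The main obstacle is making this last step precise and self-contained: one must cite the correct version of the ``primitive $+$ $p$-cycle $\Rightarrow$ $A_n$ or $S_n$'' theorem, check its hypotheses (primitivity, and that the fixed-point count of the $p$-cycle is large enough, which is where $p < n-2$ rather than merely $p < n$ is used, and where $n > 7$ rules out small sporadic primitive groups), and confirm that these are satisfied in both the odd and even cases. Everything else is an elementary cycle-counting argument.
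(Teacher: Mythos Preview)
Your proof is correct and follows the same three-step outline as the paper: transitivity from the orbit sizes, primitivity from $p>n/2$, and then Jordan's classical theorem (no CFSG or O'Nan--Scott input is needed---the paper simply cites Jordan's 1875 result, as in \cite[Theorem 3.3E]{Dixon/Mortimer:1996}, that a primitive group of degree $n$ containing a $p$-cycle with $p\le n-3$ contains $A_n$). The only cosmetic difference is in the primitivity step: rather than your block-by-block analysis, the paper observes in one line that every prime dividing the order of a transitive imprimitive subgroup of $S_n$ is bounded by some proper divisor of $n$, hence is at most $n/2<p$.
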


\begin{proof}
As $p>\frac{n}{2}$, each $\langle x \rangle$-orbit intersects the support of $y$ nontrivially, hence $\langle x,y \rangle$ is transitive.  No imprimitive subgroup contains $y$, since every prime divisor of the order of such a subgroup is at most some proper divisor of $n$.  So $\langle x,y \rangle$ is primitive.  Finally, a result of Jordan (see \cite{Jordan:1875} or \cite[Theorem 3.3E]{Dixon/Mortimer:1996}) shows that no primitive proper subgroup of $A_n$ contains $y$, since $y$ fixes more than two points.  Therefore $\langle x,y \rangle=A_n$.  As every conjugate of $x$ has the same cycle type as $x$ and the same goes for $y$, we can apply the same argument to conclude that $\langle x^g,y^h \rangle=A_n$ for all $g,h \in A_n$.
\end{proof}

Inspection shows that $A_5$ is generated invariably by a $5$-cycle and a $3$-cycle, $A_6$ is generated invariably by a $5$-cycle and an element of cycle type $(4,2)$, and $A_7$ is generated invariably by a $7$-cycle and a $5$-cycle.  (One can use \cite{Conway/Curtis/Norton/Parker/Wilson:1985}.)  This completes the proof of Proposition~\ref{alternating}.



\section{Sporadic groups} \label{sec.sporadic}

We prove Proposition~\ref{sporadic} using facts in \cite{Conway/Curtis/Norton/Parker/Wilson:1985} along with the updates given in \cite{Wilson:2017}.  For each sporadic group $S$ listed in Table \ref{tab:SporadicGroups}, we give element orders $p,r$ so that $pr$ divides $\left|S\right|$, but does not divide $\left|M\right|$ for $M<S$. Thus, these sporadic groups are generated invariably by elements of these orders. The orders $p,r$ are prime, except when $S=M_{11}$, where we take $r=8$; it can be seen in  \cite{Conway/Curtis/Norton/Parker/Wilson:1985} that $M_{11}$ indeed has an element of order eight.

\begin{table}

\caption{\label{tab:SporadicGroups}Sporadic groups $S$ and $p,r$ dividing $|S|$ such that no proper subgroup has order divisible by $pr$}

\begin{tabular}{rllrllrl}
\toprule$S$ & $p,r$ & $\qquad$ & $S$ & $p,r$ & $\qquad$ & $S$ & $p,r$\tabularnewline
\cmidrule{1-2} \cmidrule{2-2} \cmidrule{4-5} \cmidrule{5-5} \cmidrule{7-8} \cmidrule{8-8} 
$M_{11}$ & $11,8$ &  & $Co_{1}$ & $23,13$ &  & $ON$ & $31,19$\tabularnewline
$M_{22}$ & $11,7$ &  & $Fi_{22}$ & $13,11$ &  & $HN$ & $19,11$\tabularnewline
$M_{23}$ & $23,7$ &  & $Fi_{23}$ & $23,17$ &  & $Ly$ & $67,37$\tabularnewline
$J_{1}$ & $19,11$ &  & $Fi'_{24}$ & $29,23$ &  & $Th$ & $31,19$\tabularnewline
$J_{2}$ & $7,5$ &  & $He$ & $17,7$ &  & $B$ & $47,31$\tabularnewline
$J_{3}$ & $19,17$ &  & $Ru$ & $29,13$ &  & $M$ & $71,59$\tabularnewline
$J_{4}$ & $43,37$ &  & $Suz$ & $13,11$ &  &  & \tabularnewline
\bottomrule
\end{tabular}
\medskip
\end{table}


\medskip
It remains to show that $M_{11}$ is not generated invariably by two elements of prime order, and to examine $M_{12}$, $M_{24}$, $HS$, $Co_2$, $Co_3$, and $McL$. We use facts and notation appearing in \cite{Conway/Curtis/Norton/Parker/Wilson:1985}.

The primes dividing $|M_{11}|$ are $2,3,5,11$.  Given any such prime, $M_{11}$ contains one conjugacy class of elements of the given order.  As $L_2(11)$ embeds in $M_{11}$ and has order divisible by all of $2,3,5,11$, we see that our claim about $M_{11}$ holds.

The conjugacy classes of nontrivial elements of prime power order in $M_{12}$ are $2A$, $2B$, $3A$, $3B$, $4A$, $4B$, $5A$, $8A$, $8B$, $11A$ and $11B$.  Each element of order four is the square of an element of order eight.  So, classes $4A$ and $4B$ can be ignored.  Elements in class $2B$ are squares of elements of order four.  So, we can also ignore this class.  Elements in class $11A$ are powers of elements of class $11B$, and vice versa.  So, we may consider these two classes as one, which we call class $11AB$.  Classes $8A$ and $8B$ fuse in $\Aut(M_{12})$.  As no two classes consisting of elements of odd order fuse in $\Aut(M_{12})$, we may consider these two classes as one, which we call class $8AB$.

By Sylow's Theorem, we may ignore the pairs of classes $(2A,8AB)$ and $(3A,3B)$.  A point stabilizer in the natural action of $M_{12}$ on twelve points is isomorphic to $M_{11}$, and thus contains elements of orders $2,3,5,8,11$.  The group $L_2(11)$ embeds in $M_{12}$ through its action on the set of twelve $1$-spaces from $\ff_{11}^2$.
The image of this embedding contains elements of orders $2,3,5$, and $11$; those of orders two and three act without fixed points.  An element of $M_{12}$ having order three and fixing a point fixes three points and thus stabilizes a $2$-set in the natural action, as does every element of order two.  The stabilizer of a $3$-set in the natural action is isomorphic with $AGL_2(3)$ and thus contains an element of order eight, along with elements of classes $3A$ and $3B$.  It is now straightforward to confirm that $M_{12}$ is not generated invariably by two elements of prime power order.  

A maximal subgroup of $M_{12}$ having order divisible by eleven is isomorphic to one of $M_{11}$ or $L_2(11)$, and therefore contains no element of order ten.  On the other hand, $M_{12}$ contains elements of order ten.  Any such element generates $M_{12}$ invariably with any element of order eleven.

If $g \in M_{24}$ has order $23$ and $H<M_{24}$ is a maximal subgroup containing $g$, then $H$ belongs to either the unique conjugacy class of subgroups isomorphic to $M_{23}$ or the unique conjugacy class of maximal subgroups isomorphic to $L_2(23)$.  Each of $M_{23}$ and $L_2(23)$ contains a unique conjugacy class of elements of order four, while $M_{24}$ contains three classes of elements of order four.  So, some element of order four and any element of order $23$ generate $M_{24}$ invariably.  (One can show that such an element of order four lies in class $4A$.)  Using arguments similar to those employed when examining $M_{12}$, one can show that if $C_1,C_2$ are conjugacy classes in $M_{24}$ consisting of elements of prime order, then there exists $(c_1,c_2) \in C_1 \times C_2$ such that either  $\langle c_1,c_2 \rangle$ stabilizes a point, a $2$-set or a $3$-set in the natural action of $M_{24}$ on twenty four points, or $\langle c_1,c_2 \rangle$ is contained in a maximal subgroup isomorphic with $L_2(23)$.

Assume now that the group $S$ is one of $HS$ or $McL$.  Then $|S|$ is divisible by eleven.  If $H$ is a maximal subgroup of $S$ with $|H|$ divisible by eleven, then $H$ is isomorphic with one of the Mathieu groups $M_{11}$ or $M_{22}$.  It follows that $H$ has a unique conjugacy class of elements of order five, and every element of this class normalizes a subgroup of order eleven.  An element of order eleven in $S$ generates its own centralizer.  So, if $K<S$ has order eleven, then $|N_S(K)|$ is not divisible by $25$.  As $121$ does not divide $|S|$, we know that all subgroups of order eleven in $S$ are conjugate.  It follows now that there is a unique conjugacy class $C$ of elements of order five in $S$ such that an element of $C$ and any element of order eleven do not generate $S$ invariably.  Indeed, $C$ consists of those elements of order five in $S$ that normalize a subgroup of order eleven.  As each of $HS$ and $McL$ has two conjugacy classes of elements of order five, our claim about these groups holds.

Finally, assume that $S$ is one of $Co_2$ or $Co_3$.  Every maximal subgroup of $S$ containing an element of order $23$ lies in one conjugacy class of groups isomorphic to $M_{23}$.  Now $M_{23}$ contains  a unique conjugacy class of elements of order two, while $Co_2$ contains three such classes and $Co_3$ contains two such classes.  Our claim about these groups follows.

\section{Groups of Lie type} \label{sec.Lietype} We call a finite simple group $K$ a group of Lie type in characteristic~$p$ if there exist a simple algebraic group $\ovk$ over the algebraically closed field $\overline{{\mathbb F}_p}$, and an endomorphism $\sigma$ of $\ovk$, such that $K$ is generated by the elements of order $p$ in the fixed-point group $\ovk^\sigma$.  (Almost always $K=\ovk^\sigma$.)  A {\it parabolic subgroup} of $K$ is any subgroup containing the normalizer of a Sylow $p$-subgroup of $K$.  A result of Tits says that every maximal subgroup of $K$ containing a Sylow $p$-subgroup of $K$ is parabolic.  (See \cite[1.6]{Seitz:1973}.)  We seek a prime $r$ dividing $|K|$ but not dividing the order of any parabolic subgroup of $K$, as in this case $K$ is generated invariably by a Sylow $p$-subgroup and an element of order $r$. 

\subsection{An example for the uninitiated - $GL_n(q)$ and $PSL_n(q)$} The reader unfamiliar with finite groups of Lie type might benefit from considering the groups $GL_n(q)$, in which parabolic subgroups are stabilizers of nontrivial proper subspaces of the natural vector space.  If one identifies $\ff_q^n$ with $\ff_{q^n}$ and considers the (linear) action of a multiplicative generator $\alpha$ of $\ff_{q^n}^\ast$, one obtains a cyclic subgroup $C \leq GL_n(q)$ having order $q^n-1$.  On the other hand, the stabilizer $P_k$ of a $k$-dimensional subspace of $\ff_q^n$ satisfies 
$$
|P_k|=q^{{n} \choose {2}}\prod_{j=1}^k(q^j-1)\prod_{j=1}^{n-k}(q^j-1).
$$
Hence, $C$ acts irreducibly on $\ff_q^n$, as does any subgroup $Q \leq C$ whose order is a prime dividing $q^n-1$ but not dividing $q^j-1$ for $j<n$.  (Such primes are the Zsigmondy primes discussed in Section \ref{zsig} below.)  It turns out that $Q \leq SL_n(q)$ and the image of $Q$ in $K=PSL_n(q)$ will generate $K$ invariably along with any Sylow $p$-subgroup, where $p$ divides $q$.  The subgroup $C$ is often called a Coxeter torus, and is equal to $\overline{T}^\sigma$ for some maximal torus $\overline{T} \leq \overline{K}:=GL_n(\overline{\ff_p})$, where $\sigma$ acts on $\overline{K}$ by raising every matrix entry to its $q^{th}$ power.  In the general case where $K=\overline{K}^\sigma$, almost always there is some maximal torus $\overline{T} \leq \overline{K}$ such that $\overline{T}^\sigma$ has order divisible by some prime $r$ that does not divide the order of any parabolic subgroup of $K$.  It turns out that in this general case we can take $r$ to be a well-chosen Zsigmondy prime.

\subsection{Zsigmondy's Theorem} \label{zsig} We recall now a version of a theorem of Zsigmondy that suits our purposes.  Given positive integers $q$ and $e$, a {\it Zsigmondy prime} for the pair $(q,e)$ is a prime $r$ dividing $q^e-1$ but not dividing $q^f-1$ if $f<e$ is a positive integer.

\begin{theorem}[Zsigmondy, see \cite{Zsigmondy:1892}] \label{zsigthm}
A Zsigmondy prime for $(q,e)$ exists unless one of
\begin{itemize}
\item $e=6$ and $q=2$, or
\item $e=2$ and $q=2^k-1$ for some integer $k$
\end{itemize}
holds.
\end{theorem}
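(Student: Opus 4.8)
The plan is to argue via cyclotomic polynomials. For $d \geq 1$ let $\Phi_d$ be the $d$-th cyclotomic polynomial, so that $x^e - 1 = \prod_{d \mid e}\Phi_d(x)$, and let $P(e)$ denote the largest prime divisor of $e$. We may assume $e \geq 2$, the case $e = 1$ being trivial. When $e = 2$ we have $\Phi_2(q) = q + 1$, and any odd prime $r$ dividing $q + 1$ satisfies $r \nmid q - 1$, hence has order $2$ modulo $r$ and is a Zsigmondy prime; such a prime fails to exist precisely when $q + 1$ is a power of $2$, that is, $q = 2^k - 1$. So from now on $e \geq 3$.

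The heart of the argument is the standard description of the prime divisors of $\Phi_e(q)$: if a prime $r$ divides $\Phi_e(q)$ then $r \nmid q$, and, writing $d$ for the multiplicative order of $q$ modulo $r$, either $d = e$ --- so that $r$ is a Zsigmondy prime for $(q,e)$ and we are done --- or $d < e$, in which case $r \mid e$, in fact $r = P(e)$, and $v_r(\Phi_e(q)) = 1$ (here $v_r$ denotes the exponent of $r$). I would establish this by combining two classical facts: first, that $\{n \geq 1 : r \mid \Phi_n(q)\} = \{d, dr, dr^2, \dots\}$, which follows from the Frobenius congruence $\Phi_m(x^r) \equiv \Phi_m(x)^r \pmod{r}$ together with the factorization of $x^n - 1$ modulo $r$; and second, the lifting-the-exponent identity $v_r(q^{dr^j} - 1) = v_r(q^d - 1) + j$ for $j \geq 1$ when $r$ is odd and $r \mid q^d - 1$, which --- compared against $q^{dr^j} - 1 = \prod_{c \mid dr^j}\Phi_c(q)$ --- forces $v_r(\Phi_{dr^j}(q)) = 1$. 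The prime $r = P(e)$ occurring here is odd for $e \geq 3$: if $e$ were a power of $2$ then $\Phi_e(q) = q^{e/2} + 1 \equiv 2 \pmod{4}$, so its only prime factor $2$ would have to satisfy $\Phi_e(q) = 2$, which is false since $\Phi_e(q) \geq 5$; thus every $e = 2^k$ with $k \geq 2$ has a Zsigmondy prime and may be set aside.

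Consequently, for $e \geq 3$ the pair $(q,e)$ has no Zsigmondy prime if and only if $\Phi_e(q) = P(e)$. Indeed, by the description above every prime factor of $\Phi_e(q)$ other than $P(e)$ is a Zsigmondy prime, so the absence of one forces $\Phi_e(q)$ to be a power of $P(e)$, and that power is $1$ because $v_{P(e)}(\Phi_e(q)) \leq 1$ while $\Phi_e(q) > 1$; conversely, if $\Phi_e(q) = P(e)$ then the order of $q$ modulo $P(e)$ divides $P(e) - 1 < e$, so no prime has order $e$. It therefore remains to show that $\Phi_e(q) = P(e)$ forces $(q,e) = (2,6)$ --- and indeed $\Phi_6(2) = 3 = P(6)$, matching the fact that $2^6 - 1 = 63 = 3^2 \cdot 7$ has no primitive prime divisor. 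For $q \geq 3$ this follows from the lower bound $\Phi_e(q) = \prod_{\zeta} |q - \zeta| > (q-1)^{\phi(e)} \geq 2^{\phi(e)}$, the product being over primitive $e$-th roots of unity $\zeta$, which exceeds $e \geq P(e)$ for all but a few small $e$ handled directly. For $q = 2$, where this estimate degenerates, one needs a sharper lower bound for $\Phi_e(2)$ --- for instance, using that only the primitive $e$-th roots nearest to $1$ contribute factors close to $1$ while the rest are bounded away, or comparing $\Phi_e(2)$ with the quotient of $2^e - 1$ by the smaller factors $\Phi_d(2)$ --- and this again leaves only a short finite list of $(2,e)$ to inspect.

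The step I expect to be the main obstacle is exactly this last estimate in the case $q = 2$: the naive bound $\Phi_e(q) > (q-1)^{\phi(e)}$ is vacuous there, so some genuine work is needed to show that $\Phi_e(2)$ is comfortably larger than $P(e)$ outside a manageable finite list. A secondary point requiring care throughout is the special behaviour of the prime $2$ --- the anomalies at $e \in \{1,2\}$ and the separate handling of $e = 2^k$ --- which is why I would isolate the small cases at the outset.
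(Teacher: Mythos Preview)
The paper does not prove Zsigmondy's theorem: it is stated with a citation to Zsigmondy's 1892 paper and then used as a black box. So there is no argument in the paper to compare your proposal against.

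Your outline is the standard cyclotomic-polynomial proof (essentially the Birkhoff--Vandiver approach), and the reduction ``no Zsigmondy prime for $(q,e)$ with $e\geq 3$ $\Longleftrightarrow$ $\Phi_e(q)=P(e)$'' is correct, as is the estimate for $q\geq 3$. Two small slips: your treatment of $e=2^k$ (``$\Phi_e(q)\equiv 2\pmod 4$'') tacitly assumes $q$ is odd --- for even $q$ the value $q^{e/2}+1$ is odd, so $P(e)=2$ does not divide it and every prime factor is automatically primitive; and ``$e=1$ trivial'' glosses over the degenerate pair $(2,1)$, though this never arises in the paper's applications.

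On the obstacle you correctly flag at $q=2$: it is real but not deep. One clean way through is to observe that $P(e)\mid e$ forces $\phi(e)\geq P(e)-1$, so it suffices to prove $\Phi_e(2)>2^{\,\phi(e)-1}$ for $e\geq 7$. Writing $\Phi_e(2)=\prod_{d\mid e}(2^d-1)^{\mu(e/d)}$ and using $2^{d-1}\leq 2^d-1<2^d$, one gets $\Phi_e(2)\geq 2^{\,\phi(e)-\sigma(e)}$ with $\sigma(e)=\sum_{d\mid e,\ \mu(e/d)=-1}1\leq\omega(e)$; since $\phi(e)$ grows much faster than $\omega(e)$, this reduces to a short finite check that indeed isolates $e=6$. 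Alternatively, pair complex-conjugate primitive roots and use $|2-\zeta|\,|2-\bar\zeta|=5-4\cos\theta$, noting that only the two roots nearest $1$ can contribute a factor below $2$.
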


\subsection{An example for the initiated - $E_8(q)$} We use notation and terminology from \cite[Chapter 2]{Gorenstein/Lyons/Solomon:1998}.  \label{e8q} Given a power $q$ of a prime $p$, the simple group $K=E_8(q)$ is untwisted of universal type.  It follows from \cite[Theorem 2.6.5]{Gorenstein/Lyons/Solomon:1998} that a parabolic subgroup of $K$ is a semidirect product $P=UL$, where $U$ is a $p$-group and $L$ is a product $HM$. Here $M \unlhd HM$ is isomorphic to one of $D_7(q)$, $A_1(q) \times A_6(q)$, $A_4(q) \times A_2(q) \times A_1(q)$, $A_4(q) \times A_3(q)$, $D_5(q) \times A_2(q)$, $E_6(q) \times A_1(q)$, or $E_7(q)$, all components in these products being of universal type.  As $K$ is untwisted, the Cartan subgroup $H$ is isomorphic with $({\mathbb Z}_{q-1})^8$, by \cite[Table 2.4 and Theorems 2.4.1 and 2.4.7]{Gorenstein/Lyons/Solomon:1998}.  Inspection shows that a Zsigmondy prime for $(q,30)$ divides $|K|$ but does not divide $|P|$.  The reader familiar with the theory of finite groups of Lie type will have observed that a similar argument shows that whenever $K$ is an untwisted finite simple group of Lie type, one can take $e$ to be the highest exponent of the Weyl group. This is done in Table \ref{tab:Lietype}.  The twisted case is more complicated, as one sees upon considering $^2A_n(q)$ with $n$ odd.

\subsection{The general case} Using the notation for finite groups of Lie type found in \cite[Table 2.2 on page 39]{Gorenstein/Lyons/Solomon:1998}, we list in Table \ref{tab:Lietype} each family $\{^d\Sigma^\epsilon(q)\}$ of such groups.  For each such family, we give an exponent $e$ such that a Zsigmondy prime for $(q,e)$ divides the order of $K=^d\Sigma^\epsilon(q)$ but does not divide the order of any parabolic subgroup of $K$.  The order of $K$ is given in \cite[Table 2.2]{Gorenstein/Lyons/Solomon:1998}.  In the last column of our table one finds for each family a reference from which the orders of parabolic subgroups can be determined.  


\begin{table}

\caption{\label{tab:Lietype}Exponents for Zsigmondy primes that avoid parabolic subgroups of the various groups
of Lie type}

\begin{tabular}{rcl}
\toprule Lie type of $K$ & $e$ & Reference\tabularnewline
\midrule$A_{n}^{+}(q)$ & $n+1$ & \cite[Proposition 4.1.17]{Kleidman/Liebeck:1990}\tabularnewline
$A_{n}^{-}(q)$, $n$ even & $2(n+1)$ & \cite[Proposition 4.1.18]{Kleidman/Liebeck:1990}\tabularnewline 
$A_{n}^{-}(q)$, $n\geq3$ odd & $2n$ & \cite[Proposition 4.1.18]{Kleidman/Liebeck:1990}\tabularnewline 
$B_{n}(q)$, $n\geq3$ & $2n$ & \cite[Proposition 4.1.20]{Kleidman/Liebeck:1990}\tabularnewline 
$C_{n}(q)$, $n\geq3$ & $2n$ & \cite[Proposition 4.1.19]{Kleidman/Liebeck:1990}\tabularnewline
$D_{n}^{+}(q)$, $n\geq4$ & $2n-2$ & \cite[Proposition 4.1.20]{Kleidman/Liebeck:1990}\tabularnewline
$D_{n}^{-}(q)$, $n\geq4$ & $2n$ & \cite[Proposition 4.1.20]{Kleidman/Liebeck:1990}\tabularnewline
$^{3}D_{4}(q)$ & $12$ & \cite{Kleidman:1988b}\tabularnewline
$G_{2}(q)$ & $6$ & \cite{Cooperstein:1981},\cite{Kleidman:1988a}\tabularnewline
$F_{4}(q)$ & $12$ & \cite{Craven:2023}\tabularnewline
$E_{6}^{+}(q)$ & $12$ & \cite{Craven:2023}\tabularnewline
$E_{6}^{-}(q)$ & $12$ & \cite{Craven:2023}\tabularnewline
$E_{7}(q)$ & $18$ & \cite{Craven:2022UNP}\tabularnewline 
$E_{8}(q)$ & $30$ & See Section \ref{e8q}\tabularnewline
$^{2}B_{2}(q)$, $2|q$ & $4$ & \cite{Suzuki:1962}\tabularnewline
$^{2}F_{4}(q)$, $2|q$ & $12$ & \cite{Malle:1991}\tabularnewline 
$^{2}G_{2}(q)$, $3|q$ & $6$ & \cite{Kleidman:1988a}\tabularnewline 
\bottomrule 
\end{tabular}
\medskip
\end{table}


\medskip
We are left with the cases where there is no Zsigmondy prime for $(q,e)$.  By Theorem~\ref{zsigthm} and inspection, this occurs only if $K=L_2(p)$ for some Mersenne prime $p$  or the type of $K$ is one of $G_2(2)$, $A_5^+(2)$, $A_2^-(2)$, $A_3^-(2)$, $B_3(2)$, $C_3(2)$, or $D_4^+(2)$.  (Here we use Catalan's Conjecture, which says that if $a$ and $a+1$ are both perfect powers then $a=8$ and which was proved by Mih\u{a}ilescu in \cite{Mihuailescu:2004}.)

The only parabolic subgroups of $K=L_2(p)$ are the Borel subgroups, isomorphic with the semidirect product ${\mathbb Z}_p:{\mathbb Z}_{(p-1)/2}$.  If $p$ is a Mersenne prime then $\frac{p-1}{2}$ is odd, hence a nontrivial involution in $K$ is contained in no parabolic subgroup.

The simple group of type $G_2(2)$ is generated invariably by a Sylow $2$-subgroup and an element of order seven, as can be seen in \cite{Conway/Curtis/Norton/Parker/Wilson:1985} or \cite{Cooperstein:1981}. We remark that this group is isomorphic with that of type $A_2^-(3)$ and has been handled already.  There is no simple group of type $A_2^-(2)$, by an order argument. The remaining groups are the classical groups $L_6(2)$, $U_4(2)$, $\Omega_7(2)$, $Sp_6(2)$, and $\Omega_8^+(2)$.  We observe that $L_6(2)$, $U_4(2)$, $Sp_6(2)$ and $\Omega_8^+(2)$ are the exceptions listed in Proposition~\ref{Lietype}. Moreover, $\Omega_7(2) \cong Sp_6(2)$, as noted in \cite[Theorem 2.2.10]{Gorenstein/Lyons/Solomon:1998}.  

The next lemma, along with the discussion immediately following Proposition~\ref{Lietype}, completes our proof of Theorem~\ref{main1}.

\begin{lemma} \label{l62}
The group $L_6(2)$ is generated invariably by an element of order $31$ and an element of order seven.
\end{lemma}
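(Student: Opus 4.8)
The plan is to exhibit an element $x$ of order $31$ and an element $y$ of order $7$ in $K = L_6(2)$ such that $\langle x^g, y^h \rangle = K$ for all $g, h \in K$, by ruling out every maximal subgroup that could contain conjugates of both. Since all elements of a given order in a fixed conjugacy-type are conjugate under $\Aut(K)$-considerations, it suffices to show that no maximal subgroup $M < K$ has order divisible by $217 = 7 \cdot 31$; then any two elements of orders $7$ and $31$ generate invariably, because a subgroup containing conjugates of both has order divisible by $217$ and hence equals $K$. So the real task is a census of maximal subgroups of $L_6(2)$.

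First I would recall that $31$ is a Zsigmondy prime for $(2,5)$: it divides $2^5 - 1$ but not $2^j - 1$ for $j < 5$. An element of order $31$ in $GL_6(2)$ therefore acts on $\ff_2^6$ with a $5$-dimensional irreducible constituent and a $1$-dimensional fixed space, so $\langle x \rangle$ stabilizes no subspace of dimension $2$, $3$, or $4$, and the only proper nonzero invariant subspaces are the fixed line and the invariant hyperplane. Consequently $x$ lies in exactly the two maximal parabolics $P_1$ and $P_5$ (stabilizers of a point and of a hyperplane) among the parabolic subgroups — and crucially $7 \nmid |P_1| = |P_5|$, since $|P_k| = 2^{15}\prod_{j=1}^k(2^j-1)\prod_{j=1}^{6-k}(2^j-1)$ and for $k=1$ this is $2^{15}(2^1-1)(2^1-1)(2^2-1)(2^3-1)(2^4-1)(2^5-1) = 2^{15}\cdot 3 \cdot 7 \cdot 15 \cdot 31$; wait, that does contain a $7$. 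So parabolics are not immediately excluded by this crude count, and I must instead argue inside $P_1$ (and $P_5$) directly: the Levi complement of $P_1$ is $GL_5(2) = L_5(2)$, and an element of order $31$ in $P_1$ projects to a regular element of a Coxeter torus of $L_5(2)$, whose centralizer in $L_5(2)$ is cyclic of order $31$; an element normalizing $\langle x\rangle$ lies in a subgroup of order dividing $31 \cdot 5$ times the unipotent radical, so no conjugate of $y$ (order $7$) together with a conjugate of $x$ can live in $P_1$. I would make this precise by showing $\langle x, y \rangle$ is not contained in any parabolic: equivalently, $\langle x, y\rangle$ acts irreducibly on $\ff_2^6$ or at least without a common invariant subspace, which follows once $y$ is chosen to move the fixed line and fixed hyperplane of $x$.

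Having disposed of parabolics via Tits's lemma (every maximal subgroup containing a Sylow $2$-subgroup is parabolic, so any non-parabolic maximal subgroup has odd-order index contribution bounded away — more usefully, it is one of the geometric or almost-simple maximal subgroups), I would run through Aschbacher's classes for $L_6(2)$, using the explicit list of maximal subgroups (available in \cite{Conway/Curtis/Norton/Parker/Wilson:1985} and in Kleidman--Liebeck-style tables): imprimitive subgroups $GL_1(2)\wr S_6$, $GL_2(2)\wr S_3$, $GL_3(2)\wr S_2$ (orders not divisible by $31$, since $31 \nmid |GL_k(2)|$ for $k \le 3$ and $31 \nmid 6!$); field-extension subgroups $GL_3(4).2$, $GL_2(8).3$ and $GL_1(64).6 = \ff_{64}^\ast \rtimes C_6$ of order $63 \cdot 6$ (the last contains elements of order $63$, hence of orders $7$ and $9$, and $GL_3(4)$ has order divisible by $21 \cdot 5 \cdot 63$ — I must check whether $31 \mid |GL_3(4)| = (4^3-1)(4^3-4)(4^3-4^2)$, and $4^3 - 1 = 63$, $4^3 - 4 = 60$, $4^3 - 4^2 = 48$, so $31 \nmid |GL_3(4)|$; similarly $31 \nmid |GL_2(8)|$); classical subgroups $Sp_6(2)$, $O_6^{\pm}(2)$, $U_3(4).2$ (orders: $|Sp_6(2)| = 2^9 \cdot 3^4 \cdot 5 \cdot 7$, not divisible by $31$; $|O_6^-(2)| = |U_4(2)|\cdot\text{stuff}$, which I check lacks $31$); and finally almost-simple subgroups in class $\mathcal{S}$ such as $L_2(31)$ (order $31 \cdot 15 \cdot 16$, divisible by $31$ but not by $7$) and $A_7$ or $L_3(4)$ if they arise. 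For each, either $31 \nmid |M|$ or $7 \nmid |M|$; I expect every case to fall, with $L_2(31)$ being the one maximal subgroup that contains $x$ but visibly not $y$. The main obstacle will be ensuring the list of maximal subgroups is complete and correctly quoting the orders — particularly handling the field-extension subgroup of type $GL_1(64).6$ and any class-$\mathcal{S}$ subgroups, where one must verify $217 \nmid |M|$ rather than dismissing them by a one-line argument. Once the census confirms no maximal subgroup has order divisible by $7 \cdot 31$, the lemma follows, since $\langle x^g, y^h\rangle$ has order divisible by $217$ for all $g,h$ and hence cannot be contained in any maximal subgroup.
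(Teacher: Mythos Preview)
Your central strategy---show that no maximal subgroup of $L_6(2)$ has order divisible by $7\cdot 31$---is false, and you discover this yourself: the parabolic $P_1$ (and dually $P_5$) has Levi factor $GL_5(2)$, whose order $31\cdot 30\cdot 28\cdot 24\cdot 16$ is divisible by both $31$ and $7$. Your attempted repair, arguing that an element of order $7$ in $P_1$ would have to normalize $\langle x\rangle$, is a non sequitur: $\langle x^g,y^h\rangle\le P_1$ in no way forces $y^h$ to normalize $\langle x^g\rangle$, and indeed $GL_5(2)$ contains plenty of elements of order $7$ far from the normalizer of a Singer $31$-cycle. Worse, your final paragraph reverts to the disproved claim (``once the census confirms no maximal subgroup has order divisible by $7\cdot 31$''), so the argument as written does not close.

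The missing idea is that the lemma asserts invariable generation by \emph{an} element of order seven, not by every such element, and the choice of conjugacy class is essential. There are order-$7$ elements in $L_6(2)$ with a $3$-dimensional fixed space; these fix $1$-spaces and hence sit inside copies of $P_1$ together with elements of order $31$, so they do \emph{not} work. The paper instead observes that $7$ is a Zsigmondy prime for $(2,3)$, so one can take $y$ acting as a direct sum of two irreducible $3$-dimensional modules; every conjugate of this $y$ stabilizes only $3$-dimensional subspaces and therefore lies in no $P_1$ or $P_5$. Combined with the fact (from the Bray--Holt--Roney-Dougal tables) that $P_1$ and $P_5$ are the \emph{only} maximal subgroups of $L_6(2)$ of order divisible by $31$, this finishes the proof. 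Your census of non-parabolic maximals is largely in the right spirit, but the decisive step---selecting the fixed-point-free class of order-$7$ elements to avoid $P_1$ and $P_5$---is absent.
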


\begin{proof}
A complete list of maximal subgroups of $SL_6(q)$ appears in \cite[Tables 8.24 and 8.25]{Bray/Holt/Roney-Dougal:2013}.  Consulting this list, and observing that $L_6(2) \cong SL_6(2)$, we see that the maximal subgroups of $L_6(2)$ having order divisible by $31$ are the stabilizers of $1$-dimensional subspaces and the stabilizers of $5$-dimensional subspaces in the natural action on $\ff_2^6$.  Since seven is a Zsigmondy prime for $(2,3)$, the group ${\mathbb Z}_7$ has an irreducible representation of degree three over $\ff_2$.  Therefore, there is an element of order seven in $L_6(2)$ that stabilizes some $3$-dimensional subspaces but no other subspaces of $\ff_2^6$.  Such an element must generate $L_6(2)$ invariably with an element of order $31$.
\end{proof}

\section{Proof of Theorem~\ref{cosetposet}} \label{sec.cosetposet}

Our proof of Theorem~\ref{cosetposet} proceeds along the same lines as that of the analogous characteristic two result given in \cite{Shareshian/Woodroofe:2016}.  Much of what we write here will be less detailed than what appears in \cite{Shareshian/Woodroofe:2016}, which the interested reader may consult for extended arguments.  

\subsection{Smith Theory} We review here results from Smith (or Smith-Oliver) Theory that are stronger than what we will need.  This theory describes constraints on the fixed point sets of certain groups acting on simplicial complexes.  Claim (1) from the next theorem is Theorem 2 of \cite{Smith:1941}, and Claim (2) is Lemma 1 of \cite{Oliver:1975}.  Claim (3) follows from the fact that under the given conditions every face in $\Delta \setminus \Delta^G$ lies in a $G$-orbit whose size is divisible by $p$.  A good description of the key ideas behind the theorem appears in \cite{Oliver:1975}.

\begin{theorem} \label{Smith}
Assume that the group $G$ acts on the finite simplicial complex $\Delta$ so that the set of fixed faces $\Delta^{H}$
of any subgroup $H \leq G$ forms a subcomplex of $\Delta$.
\begin{enumerate}
\item If $\Delta$ is $\mathbb{F}_{p}$-acyclic
and $G$ is a $p$-group, then $\Delta^{G}$ is $\mathbb{F}_{p}$-acyclic.
\item If $\Delta$ is $\mathbb{Q}$-acyclic
and $G$ is a cyclic group, then $\tilde{\chi}(\Delta^{G})=0$.
\item If $G$ is a $p$-group, then $\tilde{\chi}(\Delta)\equiv\tilde{\chi}(\Delta^{G})\mod p$.
\end{enumerate}
\end{theorem}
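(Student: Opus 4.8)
The plan is to treat the three claims separately; (1) and (2) are classical and I would in the end cite them, but let me indicate the ideas, while (3) I would prove directly in a couple of lines. For (1), I would first reduce to the case $|G|=p$. Since $G$ is a nontrivial $p$-group it contains a central subgroup $Z$ of order $p$; then $\bar{G}=G/Z$ acts on the subcomplex $\Delta^{Z}$ (centrality of $Z$ is what makes $\Delta^{Z}$ a $G$-invariant subcomplex), the fixed subcomplexes $(\Delta^{Z})^{H/Z}=\Delta^{H}$ are again subcomplexes for $Z\le H\le G$, and $(\Delta^{Z})^{\bar{G}}=\Delta^{G}$. As $|\bar{G}|<|G|$, an induction on $|G|$ reduces the claim to the statement that $\Delta^{Z}$ is $\ff_p$-acyclic, i.e.\ to the case $G\cong\mathbb{Z}/p$. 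That case is P.\,A.\ Smith's theorem \cite[Theorem~2]{Smith:1941}: writing $t$ for a generator of $G$ and setting $\tau=t-1$ and $\sigma=1+t+\dots+t^{p-1}=\tau^{p-1}$ in $\ff_p[G]$, one exploits the $\ff_p[G]$-module structure on the simplicial chain complex $C_{\bullet}(\Delta;\ff_p)$ together with the subcomplexes $\rho\,C_{\bullet}(\Delta;\ff_p)+C_{\bullet}(\Delta^{G})$ for $\rho\in\{\tau,\sigma\}$, and the resulting Smith special homology sequences propagate $\ff_p$-acyclicity from $\Delta$ to $\Delta^{G}$.

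For (2), I would invoke the Lefschetz fixed-point theorem for a generator $t$ of the cyclic group $G$. Since $\Delta$ is $\qq$-acyclic it is nonempty and connected, so $t$ acts as the identity on $H_{0}(\Delta;\qq)\cong\qq$ and trivially on the (vanishing) higher homology; hence the Lefschetz number of $t$ equals $1$. Computed simplicially, this number is $\sum_{i}(-1)^{i}\operatorname{tr}\!\bigl(t\mid C_{i}(\Delta;\qq)\bigr)$. Here the hypothesis that $\Delta^{\langle t\rangle}=\Delta^{G}$ is a subcomplex is essential: if $t$ fixes an $i$-simplex $\sigma$ setwise, then $\sigma\in\Delta^{G}$, hence each vertex of $\sigma$ also lies in $\Delta^{G}$ and is fixed by $t$, so $t$ fixes $\sigma$ vertexwise and acts on it orientation-preservingly; consequently $\operatorname{tr}\!\bigl(t\mid C_{i}(\Delta;\qq)\bigr)$ is exactly the number of $i$-faces of $\Delta^{G}$. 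Therefore $1=\sum_{i}(-1)^{i}f_{i}(\Delta^{G})=\chi(\Delta^{G})$, i.e.\ $\widetilde{\chi}(\Delta^{G})=0$. This is Lemma~1 of \cite{Oliver:1975}.

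For (3), I would count $G$-orbits of faces. The action of $G$ permutes the faces of $\Delta$ preserving dimension, and, exactly as in (2), a face lies in $\Delta^{G}$ precisely when $G$ stabilizes it setwise. A face $\sigma\notin\Delta^{G}$ then has a proper stabilizer in the $p$-group $G$, so its $G$-orbit has size divisible by $p$; since all faces in that orbit have the same dimension, the orbit contributes $0\pmod{p}$ to $\chi(\Delta)=\sum_{\sigma}(-1)^{\dim\sigma}$. Hence $\chi(\Delta)\equiv\chi(\Delta^{G})\pmod{p}$, and subtracting $1$ from both sides yields $\widetilde{\chi}(\Delta)\equiv\widetilde{\chi}(\Delta^{G})\pmod{p}$.

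The only genuine obstacle is assembling and correctly citing the Smith-theory input needed for (1); claims (2) and (3) are short, provided one is careful about the distinction between a simplex being fixed setwise and being fixed vertexwise (and about orientation-reversing fixed simplices, which the argument for (2) must exclude). The standing hypothesis that each $\Delta^{H}$ is a subcomplex is precisely what collapses that distinction on the relevant simplices; in the paper's intended application it holds automatically, since $\Delta$ is an order complex and $G$ acts by poset automorphisms, and an order-automorphism that fixes a chain setwise fixes it elementwise.
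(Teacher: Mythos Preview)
Your proposal is correct and matches the paper's approach exactly: the paper simply cites Smith \cite{Smith:1941} for (1) and Oliver \cite{Oliver:1975} for (2), and for (3) gives the one-line orbit-counting argument that every face in $\Delta\setminus\Delta^G$ lies in a $G$-orbit of size divisible by $p$. You supply more exposition than the paper does---the inductive reduction for (1) and the Lefschetz computation for (2)---but the underlying route is the same.
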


\begin{corollary} \label{Smithcor}
Assume that the group $G$ acts on the finite simplicial complex $\Delta$ so that the set of fixed faces $\Delta^{H}$
of any subgroup $H \leq G$ forms a subcomplex of $\Delta$.
\begin{enumerate}
\item[(A)] Assume further that $1 \unlhd P \unlhd N \unlhd G$ with $P$ a $p$-group, $N/P$ cyclic, and $G/N$ of prime power order.  If $\Delta$ is $\ff_p$-acyclic then $\Delta^G \neq \emptyset$.
\item[(B)] Assume now that $1 \unlhd C \unlhd G$ with $C$ cyclic and $G/C$ of prime power order. If $\Delta$ is $\qq$-acyclic then $\Delta^G \neq \emptyset$.
\end{enumerate}
\end{corollary}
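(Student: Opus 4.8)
The plan is to deduce Corollary~\ref{Smithcor} from Theorem~\ref{Smith} by induction, peeling off the group one layer at a time, using in each step that a nonempty fixed-point subcomplex inherits the relevant acyclicity (for part (A)) or has vanishing reduced Euler characteristic together with the mod~$p$ congruence (for part (B)).

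\begin{proof}
For (A), first note that $\Delta^P$ is $\ff_p$-acyclic by Theorem~\ref{Smith}(1), since $P$ is a $p$-group and $\Delta$ is $\ff_p$-acyclic; in particular $\Delta^P \neq \emptyset$. Because $P \unlhd N$, the quotient $N/P$ acts on $\Delta^P$, and the fixed-face condition for subgroups is inherited (a subgroup of $N/P$ fixes a face of $\Delta^P$ exactly when its preimage in $N$ does, so $(\Delta^P)^{H/P} = \Delta^H$ is a subcomplex). As $N/P$ is cyclic, hence a product of cyclic $p'$-groups for various primes, we may apply Theorem~\ref{Smith}(1) repeatedly to each Sylow subgroup of $N/P$: each is a $p'$-group for its own prime $\ell$, and $\ff_\ell$-acyclicity is preserved at each stage (note $\Delta^P$ being $\ff_p$-acyclic does not immediately give $\ff_\ell$-acyclicity; instead one uses that an $\ff_p$-acyclic complex that is nonempty has a fixed point under any $p$-group, and more carefully that Theorem~\ref{Smith}(1) applied to the $\ell$-part needs $\ff_\ell$-acyclicity). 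The cleaner route, which I will take, is: since $\Delta$ is $\ff_p$-acyclic it is in particular $\qq$-acyclic (by the Universal Coefficient Theorem over the appropriate coefficient rings, or directly), so $\tilde{\chi}(\Delta)=0$; then $\tilde{\chi}(\Delta^P) \equiv 0 \bmod p$ by Theorem~\ref{Smith}(3), and since $\Delta^P$ is $\ff_p$-acyclic we indeed have $\tilde{\chi}(\Delta^P)=0$. Now iterate: for the cyclic group $N/P$ acting on the $\qq$-acyclic complex $\Delta^P$, Theorem~\ref{Smith}(2) gives $\tilde{\chi}(\Delta^N) = \tilde{\chi}((\Delta^P)^{N/P}) = 0$. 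Finally $G/N$ has prime power order $\ell^a$; applying Theorem~\ref{Smith}(3) to a Sylow $\ell$-subgroup acting on $\Delta^N$ and noting it equals $G/N$, we get $\tilde{\chi}(\Delta^G) \equiv \tilde{\chi}(\Delta^N) = 0 \bmod \ell$, and in particular $\Delta^G$ is not the empty complex, whose reduced Euler characteristic is $-1 \not\equiv 0 \bmod \ell$. Hence $\Delta^G \neq \emptyset$.

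For (B), the argument is the tail of the above: $C$ is cyclic and $\Delta$ is $\qq$-acyclic, so $\tilde{\chi}(\Delta^C)=0$ by Theorem~\ref{Smith}(2); then $G/C$ has prime power order $\ell^a$, and Theorem~\ref{Smith}(3) applied to the $\ell$-group $G/C$ acting on $\Delta^C$ gives $\tilde{\chi}(\Delta^G) \equiv \tilde{\chi}(\Delta^C) = 0 \bmod \ell$, so $\tilde{\chi}(\Delta^G) \neq -1$ and therefore $\Delta^G \neq \emptyset$.
\end{proof}

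The main obstacle, and the step I would want to pin down most carefully, is the bookkeeping in part (A): one must ensure that the fixed-point-subcomplex condition of Theorem~\ref{Smith} is genuinely inherited at each stage of the filtration $1 \unlhd P \unlhd N \unlhd G$ (so that the theorem applies to the quotient acting on the previous fixed-point set), and one must be careful about which acyclicity hypothesis is available when. The clean way around the coefficient-field mismatch is to convert $\ff_p$-acyclicity into the numerical statement $\tilde{\chi}=0$ early, via Theorem~\ref{Smith}(3), and then propagate only the Euler-characteristic information through the cyclic layer (Theorem~\ref{Smith}(2)) and the prime-power layer (Theorem~\ref{Smith}(3)); a complex with $\tilde{\chi} = 0$ is certainly nonempty, which is all that is claimed. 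Everything else is a routine chaining of the three parts of Theorem~\ref{Smith}.
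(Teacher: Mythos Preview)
Your argument is correct and follows exactly the route the paper sketches: for (A) apply Theorem~\ref{Smith}(1) to get $\Delta^P$ $\ff_p$-acyclic (hence $\qq$-acyclic), then (2) to get $\tilde{\chi}(\Delta^N)=0$, then (3) to conclude $\Delta^G\neq\emptyset$; for (B) just (2) then (3). The false start and the extra invocation of (3) in the middle of part~(A) are unnecessary---the only observation needed beyond Theorem~\ref{Smith} is that an $\ff_p$-acyclic complex is $\qq$-acyclic, applied to $\Delta^P$ (not just to $\Delta$).
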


Corollary~\ref{Smithcor} has been used before in combinatorics, although such use seems to be rare.  Claim (B) of the corollary will suffice for our purposes. Claim (A) was used (with $N=G$) in \cite{Shareshian/Woodroofe:2016}. Kahn, Saks, and Sturtevant earlier had used Claim (A) in \cite{Kahn/Saks/Sturtevant:1984} to attack Karp's Evasiveness Conjecture.  Claim (A) is proved by applying Claims (1)-(3) of Theorem~\ref{Smith} in order, and observing that an $\ff_p$-acyclic complex is $\qq$-acyclic.  Claim (B) follows from Claims (2) and (3) of the theorem.  

Finally, we state a special case of Claim (B) of Corollary~\ref{Smithcor} that suffices for our purposes here.  We observe that if a group $H$ acts in an order preserving manner on a poset $\p$, then $(\Delta \p)^H=\Delta(\p^H)$, hence Theorem~\ref{Smith} applies.

\begin{lemma} \label{smith}
Assume that the group $G$ acts in an order preserving manner on the finite poset $\p$.  If $\de \p$ has trivial reduced rational homology and $G$ is the direct product of a cyclic group and a group of prime power order, then the fixed point set $\p^G$ is nonempty.
\end{lemma}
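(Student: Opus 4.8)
The plan is to deduce Lemma~\ref{smith} directly from Claim (B) of Corollary~\ref{Smithcor}, together with the observation recorded immediately before the lemma: an order-preserving action of a group $H$ on a poset $\p$ induces a simplicial action on $\de\p$ for which $(\de\p)^H = \de(\p^H)$.

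First I would unpack the hypothesis on $G$. Write $G = C \times Q$ with $C$ cyclic and $Q$ of prime power order, and view $C$ as the direct factor $C \times 1 \leq G$. Then $C \unlhd G$ and $G/C \cong Q$ has prime power order, so the chain $1 \unlhd C \unlhd G$ is exactly of the shape demanded by Corollary~\ref{Smithcor}(B). (This is the only place the direct-product hypothesis is used; the argument in fact applies to any $G$ possessing a normal cyclic subgroup with quotient of prime power order.)

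Next I would check the remaining hypotheses of the corollary with $\de\p$ in the role of the complex $\Delta$. For any subgroup $H \leq G$, a chain of $\p$ fixed setwise by $H$ is fixed pointwise, since an order-preserving permutation of a finite totally ordered set is the identity; hence $(\de\p)^H = \de(\p^H)$, which is visibly a subcomplex of $\de\p$, so the standing hypothesis of Corollary~\ref{Smithcor} is met. The hypothesis that $\de\p$ has trivial reduced rational homology is precisely the statement that $\de\p$ is $\qq$-acyclic (and, in particular, nonempty). Corollary~\ref{Smithcor}(B) therefore yields $(\de\p)^G \neq \emptyset$; since $(\de\p)^G = \de(\p^G)$, we conclude $\p^G \neq \emptyset$.

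I do not anticipate any real obstacle: the content of the lemma is entirely contained in Corollary~\ref{Smithcor}(B), and the work amounts to translating between the simplicial-complex and poset formulations and confirming the standing hypothesis that fixed sets of subgroups form subcomplexes. The only mild subtleties are the identification of the normal series $1 \unlhd C \unlhd G$ and the remark that setwise-fixed chains of a poset are automatically fixed pointwise.
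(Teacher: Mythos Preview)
Your proposal is correct and is essentially identical to the paper's own treatment: the paper states the lemma as a special case of Corollary~\ref{Smithcor}(B), noting only that $(\Delta\p)^H=\Delta(\p^H)$ so that the standing hypothesis of the corollary is satisfied. You supply a bit more detail (the explicit normal series $1\unlhd C\unlhd G$ and the reason setwise-fixed chains are pointwise-fixed), but the argument is the same.
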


\subsection{Proving Theorem~\ref{cosetposet}}The coset poset $\cg$ consists of all cosets $Hx$ such that $H$ is a proper subgroup of $G$.   Assume that $N \unlhd G$.  Following Brown (see \cite{Brown:2000}) we define the subposet of $\cg$,
$$
\cgn:=\{Hx \in \cg:HN=G\}.
$$
Brown shows in \cite[Proposition 9]{Brown:2000} that $\de\cg$ is homotopy equivalent to the simplicial join of $\de\cgn$ and $\de\cgq$. Using the K\"unneth formula for joins (see for example \cite[(9.12)]{Bjorner:1995}) and induction on the length of a chief series for $G$, we see that Theorem~\ref{cosetposet} follows directly from the next result.

\begin{proposition} \label{cgnprop}
If $N$ is a minimal normal subgroup of the finite group $G$, then $\de\cgn$ has nontrivial reduced rational homology.
\end{proposition}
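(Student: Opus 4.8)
My plan is to argue by contradiction via Lemma~\ref{smith}. Assume $\de\cgn$ has trivial reduced rational homology. I will then produce subgroups $\widehat C$ and $\widehat P$ of $G$ — $\widehat C$ cyclic and $\widehat P$ of prime power order — so that the external direct product $\widehat C\times\widehat P$ acts on $\cgn$ with empty fixed point set, contradicting Lemma~\ref{smith}. In this action $\widehat C$ acts by left translations $Hx\mapsto gHx$ and $\widehat P$ by right translations $Hx\mapsto Hxg$; these are commuting families of order preserving automorphisms of $\cgn$ (using $(gHg^{-1})N=G$, which holds since $N\unlhd G$). The fixed point computation is immediate: $Hx$ is fixed by left translation by $g$ exactly when $g\in H$, and by right translation by $g$ exactly when $xgx^{-1}\in H$. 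Hence $\cgn^{\widehat C\times\widehat P}\neq\emptyset$ iff some proper $H<G$ with $HN=G$ contains $\widehat C$ together with a conjugate of $\widehat P$; and I will arrange that any such $H$ would have to contain $N$, which is absurd since then $HN=H<G$.

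Recall that $N$ is either an elementary abelian $p$-group, or a direct product $S_1\times\dots\times S_k$ of copies of a nonabelian simple group $S$, transitively permuted by $G$. In the abelian case I take $\widehat C=1$ and $\widehat P=N$ (of prime power order): a fixed coset $Hx$ would satisfy $N=xNx^{-1}\le H$, impossible. Here Theorem~\ref{main1} plays no role; this case essentially goes back to Brown.

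In the nonabelian case I would invoke Theorem~\ref{main1} to fix a prime $p$, a Sylow $p$-subgroup $P_0$ of $S$, and a cyclic $C_0=\langle c_0\rangle\le S$ with $S=\langle P_0^a,C_0^b\rangle$ for all $a,b\in S$. After fixing isomorphisms $\phi_i\colon S\to S_i$, I set $\widehat P:=\prod_i\phi_i(P_0)$, a Sylow $p$-subgroup of $N$ (and of prime power order), and $\delta:=\prod_i\phi_i(c_0)$, $\widehat C:=\langle\delta\rangle$. If $Hx$ were fixed, then $H$ would contain $\delta$ and some conjugate $Q$ of $\widehat P$; since $\widehat P\le N\unlhd G$, this $Q$ is again a Sylow $p$-subgroup of $N$, so $Q=\prod_i R_i$ with each $R_i\neq 1$ a Sylow $p$-subgroup of $S_i$. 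Setting $J:=\langle\delta,Q\rangle\le H\cap N$, the projection of $J$ to $S_i$ contains $\langle\phi_i(c_0),R_i\rangle$, and the latter equals $S_i$ because $R_i$ is a conjugate of $\phi_i(P_0)$ and $\langle\phi_i(P_0)^a,\phi_i(c_0)\rangle=S_i$ for all $a\in S_i$ (Theorem~\ref{main1} transported by $\phi_i$). So $J$ projects onto every $S_i$; as $J\le N$ normalizes each $S_i$, the subgroup $J\cap S_i$ is normal in $J$, hence $J\cap S_i=\pi_i(J\cap S_i)\unlhd\pi_i(J)=S_i$, and since $J\cap S_i\supseteq R_i\neq 1$ and $S_i$ is simple, $J\cap S_i=S_i$. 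Therefore $J\supseteq\prod_i S_i=N$, forcing $N\le H$ — the promised absurdity. Thus $\cgn^{\widehat C\times\widehat P}=\emptyset$.

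In both cases Lemma~\ref{smith} is violated, so $\de\cgn$ has nontrivial reduced rational homology, completing the plan. I expect the main obstacle to be precisely the last step of the nonabelian case: showing that one diagonal element together with an \emph{arbitrary} Sylow $p$-subgroup of $N$ generates all of $N$. This is exactly where invariable generation of $S$ by a Sylow subgroup and a cyclic subgroup is essential — and crucially Theorem~\ref{main1} places no restriction on the prime $p$, so the same $p$ works for every simple factor simultaneously. This approach also neatly avoids any need to classify the proper supplements of $N$ (for instance the diagonal-type and product-type maximal subgroups) or to find an element of $G$ permuting the factors $S_i$ in a single cycle, since the diagonal element $\delta$ lies inside $N$ itself.
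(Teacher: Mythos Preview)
Your proposal is correct and follows essentially the same route as the paper: the same $G\times G$ action on $\cgn$, the same fixed-point description, the same diagonal cyclic subgroup and product Sylow subgroup of $N$ in the nonabelian case, and the same projection-plus-simplicity argument to force $N\le H$. The only cosmetic difference is in the abelian case, where the paper observes directly that $\cgn$ is an antichain (hence empty or disconnected), while you fold this case into the same Smith-theory framework by taking $\widehat C=1$ and $\widehat P=N$; both arguments are short and valid.
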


The remainder of this section is devoted to proving Proposition~\ref{cgnprop}.

\begin{lemma} \label{abelian}
Proposition~\ref{cgnprop} is true under the assumption that $N$ is abelian.
\end{lemma}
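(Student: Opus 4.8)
The plan is to show that, when $N$ is abelian, the poset $\cgn$ is an antichain; then $\de\cgn$ is a $0$-dimensional complex — a finite set of points — whose reduced rational homology is visibly nontrivial except possibly when that set has exactly one point, and I will rule that case out by a counting argument. The only place the abelian hypothesis enters is in establishing the antichain property.

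First I would record that every member $Hx$ of $\cgn$ is a coset of a \emph{complement} to $N$ in $G$. Given $H<G$ with $HN=G$, the subgroup $H\cap N$ is normalized by $H$ trivially, and by $N$ because $N$ is abelian; hence it is normalized by $HN=G$, so $H\cap N\unlhd G$. Minimality of $N$ gives $H\cap N\in\{1,N\}$, and $H\cap N=N$ would force $N\le H$ and so $H=HN=G$, against $H<G$. Thus $H\cap N=1$, so $|H|=[G:N]$. It follows at once that $\cgn$ is an antichain: if $Hx\subseteq Ky$ with $H,K$ complements to $N$, then $H\subseteq K$, and $|H|=|K|=[G:N]$ forces $H=K$, whence $Hx$ and $Ky$ are cosets of the same subgroup and are therefore equal.

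Next I would count $|\cgn|$. A coset $Hx$ determines $H$ (as the setwise product $(Hx)(Hx)^{-1}$), so cosets of distinct complements are distinct; and each complement $H$ has exactly $[G:H]=|N|$ right cosets. Hence $|\cgn|=c\,|N|$, where $c\ge 0$ is the number of complements to $N$ in $G$. Since $N$ is a minimal normal subgroup it is nontrivial, so $|N|\ge 2$, and therefore $|\cgn|$ is either $0$ or at least $2$. If $|\cgn|\ge 2$, then $\de\cgn$ is a discrete set of at least two points and $\widetilde{H}_0(\de\cgn;\qq)\cong\qq^{\,|\cgn|-1}\ne 0$. If $\cgn=\emptyset$, then, under the convention (forced by the join decomposition $\de\cg\simeq\de\cgn*\de\cgq$) that the order complex of the empty poset is the $(-1)$-sphere $\{\emptyset\}$, we have $\widetilde{H}_{-1}(\de\cgn;\qq)\cong\qq\ne 0$. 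Either way $\de\cgn$ has nontrivial reduced rational homology.

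I do not expect a serious obstacle here: once the antichain observation is in hand, the lemma is bookkeeping. The only points requiring a little care are that the counting excludes $|\cgn|=1$ — which is exactly where $|N|\ge 2$ is used — and the treatment of the empty case. It is worth emphasizing that the abelian hypothesis is used solely to place $H\cap N$ inside a normal subgroup of $G$; for nonabelian $N$ this breaks down, the poset $\cgn$ need no longer be an antichain, and Proposition~\ref{cgnprop} will require a genuinely different argument.
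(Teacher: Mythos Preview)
Your proof is correct and follows essentially the same approach as the paper's: both show that $H\cap N=1$ via normality and minimality, deduce that $\cgn$ is an antichain of size divisible by $|N|$, and then handle the empty and nonempty cases separately. Your write-up is more explicit (spelling out the antichain argument and the exclusion of $|\cgn|=1$), but the underlying idea is identical.
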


\begin{proof}
If $N$ is an abelian minimal normal subgroup of $G$ and $HN=G$, then $H \cap N=1$, as $H \cap N$ is normalized by both $N$ and $H$.  It follows that $|H|=[G:N]$.  Therefore, $\cgn$ is an antichain of size divisible by $|N|$.  If $\cgn=\emptyset$, then $\de\cgn=\{\emptyset\}$ and $\widetilde{H}_{-1}(\de\cgn,\qq) \neq 0$.  Otherwise, $\de\cgn$ is not connected and $\widetilde{H}_0(\de\cgn,\qq) \neq 0$.
\end{proof}

We assume from now on that the minimal normal subgroup $N$ of $G$ is nonabelian.   


There is an action $G \times G$ on $\cg$, defined by
\begin{equation} \label{act}
Hx^{(g,k)}:=g^{-1}Hxk=H^gg^{-1}xk.
\end{equation}
As $N \unlhd G$, $\cgn$ is invariant under the action defined in (\ref{act}), and the resulting action of $G \times G$ on $\cgn$ restricts to an action of $C \times P$ on $\cgn$ whenever $C,P \leq G$.  With Lemma~\ref{smith} in mind, we are interested in the case where $C$ is cyclic and $P$ has prime power order.  In order to apply the lemma, we must understand fixed points in the given action.  These are described in \cite[Lemma 3.8]{Shareshian/Woodroofe:2016}, which is not hard to prove.

\begin{lemma} \label{actlem}
If $C \times P \leq G \times G$ acts on $\cg$ as in (\ref{act}), then $\cg^{C \times P}$ consists of those $Hx$ such that $\langle C,P^{x^-1} \rangle \leq H$
\end{lemma}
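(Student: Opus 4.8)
The plan is to unwind the definition of the action~(\ref{act}) and work out exactly when a coset $Hx$ is fixed by a pair $(c,k) \in C \times P$, then intersect over all such pairs. First I would recall that under~(\ref{act}) we have $Hx^{(c,k)} = c^{-1}Hxk$, and that two cosets $Ax$ and $By$ are equal if and only if $A=B$ and $xy^{-1} \in A$. So $Hx$ is fixed by $(c,k)$ precisely when $c^{-1}Hx k = Hx$. Rewriting the left side as $H^{c} c^{-1} x k$ (using $c^{-1}Hc = H^c$), equality of cosets forces first that $H^c = H$, i.e. $c \in N_G(H)$, and second that the two coset representatives differ by an element of $H$, namely $(c^{-1}xk)(x)^{-1} = c^{-1} x k x^{-1} \in H$, equivalently $x k x^{-1} \in c H = H$ (the last step using $c \in H$ once we know $H^c=H$ — wait, $c \in N_G(H)$ does not give $c \in H$).

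Let me restructure: $c^{-1} x k x^{-1} \in H$ together with $H^c = H$. From $H^c = H$ we get $c \in N_G(H)$. Conjugating $x k x^{-1} \in cH$ is cleaner handled as follows: the condition $c^{-1}Hxk = Hx$ says $Hxk = cHx = Hcx$ (since $c$ normalizes $H$), hence $xkx^{-1} \cdot (xx^{-1}) $ — better yet, $Hxk = Hcx$ iff $Hxkx^{-1} = Hc$ iff $xkx^{-1} \in Hc$, and since $c \in N_G(H)$ and we want to land inside $H$, I would instead phrase everything in terms of the single conjugated subgroup: the orbit condition over all $(c,k)$ will say $c \in H$ and $x k x^{-1} \in H$, i.e. $k \in H^{x^{-1}} := x^{-1} H x$. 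Running over all $c \in C$ and all $k \in P$ simultaneously, $Hx$ is fixed by the whole group $C \times P$ iff $C \leq H$ and $P \leq H^{x^{-1}}$, equivalently $C \leq H$ and $P^{x^{-1}} \leq H$, equivalently $\langle C, P^{x^{-1}}\rangle \leq H$. That is exactly the claimed description.

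The one genuine subtlety I expect to need care with is the bookkeeping of which conjugate of $P$ appears and on which side, since the action is a left action by $C$ and a right action by $P$; the asymmetry in~(\ref{act}) is what produces the asymmetric-looking conclusion $\langle C, P^{x^{-1}}\rangle \le H$. Concretely, fixing by all $(1,k)$ with $k \in P$ gives $Hxk = Hx$ for all such $k$, i.e. $x P x^{-1} \subseteq H$, i.e. $P^{x^{-1}} \le H$; fixing by all $(c,1)$ gives $c^{-1}Hx = Hx$, i.e. $H^c = H$ and $c \in H$, so $C \le H$. A single clean way to see $c \in H$: from $c^{-1}Hx = Hx$ we get $Hx = cHx$, so $x(Hx)^{-1} \ni$ — rather, left-multiplying, $c \in Hx x^{-1} H = H$ once we note $cHx = Hx \Rightarrow c \in Hx x^{-1} = H$. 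Combining the two one-parameter conditions gives the full statement, and conversely $\langle C, P^{x^{-1}}\rangle \le H$ immediately makes $Hx$ fixed by every $(c,k)$. The main obstacle, such as it is, is purely notational: keeping the left/right conventions straight so that $P^{x^{-1}}$ rather than $P^x$ appears; there is no real mathematical difficulty.
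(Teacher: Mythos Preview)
Your argument is correct and is exactly the direct verification one expects: reduce to the generators $(c,1)$ and $(1,k)$, observe that $c^{-1}Hx=Hx$ forces $c\in H$ while $Hxk=Hx$ forces $xkx^{-1}\in H$, and combine. The paper does not actually give a proof of this lemma---it simply cites \cite[Lemma~3.8]{Shareshian/Woodroofe:2016} and remarks that it ``is not hard to prove''---so your computation is precisely the sort of routine check the authors have in mind. (One small notational slip: with the paper's convention $H^g=g^{-1}Hg$, the intermediate step should read $P\le x^{-1}Hx=H^{x}$, not $H^{x^{-1}}$; your conclusion $P^{x^{-1}}=xPx^{-1}\le H$ is nonetheless correct.)
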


\begin{corollary} \label{fpfcor}
If $C \times P \leq N \times N$ acts on $\cg$ as in (\ref{act}) and $\langle C,P^g \rangle=N$ for all $g \in G$, then $\cgn^{C \times P}=\emptyset$.
\end{corollary}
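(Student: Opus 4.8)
The plan is to read this off directly from Lemma~\ref{actlem}. Recall that $\cgn$ is by definition a subposet of $\cg$, so $\cgn^{C \times P} = \cgn \cap \cg^{C \times P}$, and every element of $\cgn$ is a coset $Hx$ of a \emph{proper} subgroup $H$ of $G$ satisfying $HN = G$. Thus it suffices to show that no such coset is fixed by $C \times P$, and for this I only need to combine Lemma~\ref{actlem} with the standing hypothesis $\langle C, P^g \rangle = N$ for all $g \in G$.

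First I would suppose, toward a contradiction, that $Hx \in \cgn^{C \times P}$. Since in particular $Hx \in \cg^{C \times P}$, Lemma~\ref{actlem} gives $\langle C, P^{x^{-1}} \rangle \leq H$. The element $x$ lies in $G$, hence so does $x^{-1}$, and so the hypothesis applied with $g = x^{-1}$ yields $\langle C, P^{x^{-1}} \rangle = N$. Therefore $N \leq H$. Combining this with $HN = G$ forces $H = HN = G$, contradicting the properness of $H$. Hence $\cgn^{C \times P} = \emptyset$.

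I do not expect any genuine obstacle here: the statement is a two-line consequence of Lemma~\ref{actlem}, the only points worth a glance being that the conjugating parameter $x^{-1}$ ranges over all of $G$ as $Hx$ ranges over the cosets in $\cgn$ (so the hypothesis on $\langle C, P^g \rangle$ may indeed be invoked for the relevant element), and that $N \leq H$ is incompatible with $Hx$ lying in $\cgn$. This corollary is precisely what we will feed into the contrapositive of Lemma~\ref{smith}: once we exhibit a cyclic $C \leq N$ and a Sylow subgroup $P \leq N$ with $\langle C, P^g \rangle = N$ for every $g \in G$, the induced action of $C \times P$ on $\de\cgn$ has empty fixed-point set, and hence $\de\cgn$ cannot have trivial reduced rational homology.
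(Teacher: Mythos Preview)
Your proof is correct and follows essentially the same argument as the paper: assume a fixed coset $Hx$ exists, invoke Lemma~\ref{actlem} to get $\langle C,P^{x^{-1}}\rangle\leq H$, apply the hypothesis at $g=x^{-1}$ to conclude $N\leq H$, and derive a contradiction from $HN=G$ with $H\neq G$. The extra contextual remarks you include are fine but not needed for the argument itself.
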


\begin{proof}
Assume for contradiction that $Hx \in \cgn^{C \times P}$.  We know that $\langle C,P^{x^-1} \rangle \leq H$ by Lemma~\ref{actlem}.  Therefore, $N \leq H$.  This is impossible, as $H \neq G$ and $HN=G$.
\end{proof}

Proposition~\ref{cgnprop} will follow from Lemma~\ref{smith} and Corollary~\ref{fpfcor} if we can show that there exist cyclic $C \leq N$ and $P \leq N$ of prime power order satisfying $\langle C,P^g \rangle=N$ for all $g \in G$ whenever $G$ has a nonabelian minimal normal subgroup $N$.  Given such $N$, there exist a nonabelian simple group $L$ and subgroups $L_i$ ($1 \leq i \leq t$) of $N$ such that each $L_i$ is isomorphic with $L$ and $N$ is the internal direct product of the $L_i$.  Moreover, the $L_i$ are all the minimal normal subgroups of $N$ and are therefore permuted by the conjugation action of any $g \in G$. (All of this can be found for example in \cite[Section 4.3]{Dixon/Mortimer:1996}.)  We write $\pi_i$ for the projection of $N$ onto $L_i$ and observe that for $J \leq N$, $J \cap L_i$ is a normal subgroup of $\pi_i(J)$.  In what follows, we abuse notation by identifying each $L_i$ with $L$.

By Theorem~\ref{main1}, there exist $D,Q \leq L$ such that $D$ is cyclic, $Q$ is a Sylow $p$-subgroup of $L$ for some prime $p$, and $D$ and $Q$ together generate $L$ invariably. Pick $P \leq N$ such that $\pi_i(P)=P \cap L_i=Q$ for all $i \in [t]$.  So, $P$ is a $t$-fold direct product of copies of $Q$ and therefore a Sylow $p$-subgroup of $N$.  Pick $C \leq N$ such that $\pi_i(C)=D$ for all $i \in [t]$ and $C \cap \prod_{i \in I}L_i=1$ for all proper subsets $I$ of $[t]$.  Then $C \cong D$, hence $C$ is cyclic.

Assume $M \leq N$ contains $C$ and a $G$-conjugate of $P$, the latter being a Sylow $p$-subgroup of $N$ and therefore a direct product of Sylow $p$-subgroups of $L$.  Since $Q$ and $D$ generate $L$ invariably, $\pi_i(M)=L_i$ for all $i \in [t]$.  On the other hand, since $P \cap L_i \neq 1$ and $L_i \unlhd N$, we see that $L_i \cap M \neq 1$ for all $i$.  Since $L$ is simple and $M \cap L_i \unlhd \pi_i(M)$, we conclude that $M \cap L_i=L_i$ for all $i$, that is, $M=N$.  Therefore $\langle C,P^g \rangle=N$ for all $g \in G$.  Applying Corollary~\ref{fpfcor} and Lemma~\ref{smith}, we conclude that Proposition~\ref{cgnprop} holds.  Theorem~\ref{cosetposet} follows.

\section{Final Comments} \label{sec.final}

Given Proposition~\ref{Lietype}, one might ask whether every finite simple group is generated invariably by a Sylow subgroup and an element of prime order.  One can confirm using \cite{Conway/Curtis/Norton/Parker/Wilson:1985} that $M_{12}$ is generated invariably by a Sylow $3$-subgroup and an element of order eleven.  Combined with Proposition~\ref{sporadic}, this provides a positive answer for sporadic groups.  

The question remains open for alternating groups.  In fact, we do not know whether every alternating group is generated invariably by two Sylow subgroups.  Some positive results appear in \cite{Guralnick/Shareshian/Woodroofe:2023} and \cite{Shareshian/Woodroofe:2018}.  The strongest result in this direction is due to Ter\"{a}v\"{a}inen, who showed in \cite{Teravainen:2023} that the set of $n$ for which $A_n$ is generated invariably by two elements of prime order has asymptotic density one in the set of positive integers.

Several of our claims about sporadic groups and small groups of Lie type that we proved using ad hoc arguments were confirmed using GAP.  Our GAP code and its output will be available as arXiv ancillary files.

\bibliographystyle{alpha}
\bibliography{Russbib2}

\end{document}